\newcommand{\R}{\mathbb{R}}
\newcommand{\Q}{\mathbb{Q}}
\newcommand{\Z}{\mathbb{Z}}
\newcommand{\ud}{\mathrm{d}}
\newcommand{\F}{\mathscr{F}}
\newcommand{\D}{\mathscr{D}}
\newcommand{\si}{\sigma}
\newcommand{\E}{\mathbb{E}}
\newcommand{\de}{\Delta}
\newcommand{\s}{\mathscr{S}}
\newcommand{\G}{\mathscr{G}}
\theoremstyle{plain}
\newtheorem{theorem}{Theorem}
\newtheorem{lem}[theorem]{Lemma}
\newtheorem{cor}[theorem]{Corollary}
\theoremstyle{definition}
\theoremstyle{remark}
\numberwithin{equation}{section}
\numberwithin{theorem}{section}
\title[Two weight $L^{p}$-inequalities]{Two weight $L^{p}$-inequalities for dyadic shifts and the dyadic square function}
\author{Emil Vuorinen}
\address{DEPARTMENT OF MATHEMATICS AND STATISTICS, P.O.B 68 (GUSTAF H\"ALLSTR\"OMIN KATU 2B), FI-00014 UNIVERSITY OF HELSINKI, FINLAND}
\email{emil.vuorinen@helsinki.fi}
\subjclass[2010]{Primary 42B20; Secondary 42B25}
\keywords{dyadic shift, dyadic square function, two weight inequality, testing condition}
\begin{document}

\begin{abstract}
We consider two weight $L^{p}\to L^{q}$-inequalities for dyadic shifts and the dyadic square function with general exponents $1<p,q<\infty$. It is shown that if a so-called quadratic $\mathscr{A}_{p,q}$-condition related to the measures holds, then a family of dyadic shifts satisfies the two weight estimate in an $\mathcal{R}$-bounded sense if and only if it satisfies the direct- and the dual quadratic testing condition. In the case $p=q=2$ this reduces to the result by T. Hyt\"onen, C. P\'erez, S. Treil and A. Volberg \cite{HPTV}. 

The dyadic square function satisfies the two weight estimate if and only if it satisfies the quadratic testing condition and the quadratic $\mathscr{A}_{p,q}$-condition holds. Again in the case $p=q=2$ we recover the result by F. Nazarov, S. Treil and A. Volberg \cite{NTV2}.

An example shows that in general the quadratic $\mathscr{A}_{p,q}$-condition is stronger than the Muckenhoupt type $A_{p,q}$-condition.
\end{abstract}

\maketitle

\section{Introduction}
The main purpose of this note is to consider \emph{two weight norm inequalities} for dyadic shifts and the dyadic square function. A two weight $L^{p} \to L^q$-inequality, $1<p,q<\infty,$ for an operator $T$ defined for a suitable class of functions would mean an inequality of the form
\begin{equation}\label{two weight}
\Big( \int_{\R^{n}} |Tf|^{q} w \ud x \Big)^{\frac{1}{q}} \leq C \Big( \int_{\R^{n}} |f|^{p} v \ud x \Big) ^{\frac{1}{p}},
\end{equation}
where the constant $C>0$ does not depend on $f$. Here $v$ and $w$ are \emph{weights}, that is, non-negative Borel measurable functions. The two weight inequality (\ref{two weight}) can also be formulated a little differently, and we will do so, but this type of a problem we are anyway working with. 

Dyadic shifts are in a sense discrete models of Calder\'on-Zygmund singular integral operators. They are much simpler than a general Calder\'on-Zygmund operator but they already have the complication that they are not \emph{positive} integral operators.

The sense in which we mean that the dyadic shifts represent Calder\'on-Zygmund operators is that it was shown in \cite{H} that a general Calder\'on-Zygmund operator can be represented as an average over all dyadic systems on $\R^{N}$ of a rapidly convergent series of dyadic shifts. This representation was used to prove the so called $A_{2}$-\emph{conjecture} about sharp constants in \emph{one weight} estimates for Calder\'on-Zygmund operators. 

Dyadic shifts fall also in the category of \emph{well localized operators}  as defined in \cite{NTV} by F. Nazarov, S. Treil and A. Volberg.  They showed that a two weight inequality holds for a well localized operator in $L^{2}$ if and only if the operator satisfies the so called \emph{Sawyer type testing conditions}. This means that it suffices to show that the operator and its formal adjoint satisfy the inequality with an arbitrary indicator of a (dyadic) cube, and hence the Sawyer type testing may also be called \emph{indicator testing.} Two weight $L^{p} \to L^q$-inequalities for well localized operators were considered in \cite{V}.

The definition of a well localized operator depends on a parameter $r$ which measures how ``well'' the operator is localized. The constant $C$ in the two weight inequality proved in \cite{NTV} and \cite{V} depends on $r$ and the constants in the Sawyer type testing conditions. 

In \cite{HPTV} the dyadic shifts were looked at from a little different perspective. There T. Hyt\"onen, C. P\'erez , S. Treil and A. Volberg proved the two weight inequality in $L^{2}$ assuming the Sawyer type testing conditions and finiteness of the so called $A_{2}$-constant related to the weights. This approach was related to the $A_{2}$-conjecture mentioned above, and this is the point of view that we take in this note. The main difference between this approach and the more general point of view of well localized operators is that this way one gets a better estimate depending on the \emph{complexity} of the shift, which was crucial in the $A_{2}$-conjecture. The complexity of the shift is somewhat analogous to the ``well localization'' parameter in the definition of well localized operators.

Our novelty here is that we characterize the two weight inequality for dyadic shifts for general exponents $1<p,q<\infty$, whereas it was only done before in the case $p=q=2$. Despite the positive result in the case $p=q=2$, F. Nazarov has constructed an example (unpublished) of a Haar multiplier (a special kind of dyadic shift) and a pair of weights such that the operator satisfies the Sawyer type testing conditions for some exponent $1<p=q<\infty$, $p\not=2$, but still does not satisfy the (quantitative) two weight estimate. See \cite{V}, Section 4, for a more precise statement of the example.

Knowing that there are problems with the Sawyer type testing and general exponents $p\in(1,\infty)$, we generalize the testing conditions for exponents $1<p<\infty$ in the spirit of $\mathcal{R}$-bounded operator families as used for example in \cite{W}. We call these new testing conditions \emph{quadratic testing conditions}. Similarly we interpret the $A_{2}$-condition as a special case of a \emph{quadratic $\mathscr{A}_{p,q}$-condition}, see Section \ref{square Ap condition} for a definition. 

Now we state a special version of the main Theorem \ref{two weight shift} for the dyadic shifts. It is assumed here that we have some fixed underlying dyadic lattice $\D$ on $\R^{N}$ which is used in the definition of the shifts and the $\mathscr{A}_{p,q}$-condition. 

\begin{theorem}
Fix exponents $p,q \in (1,\infty)$, and assume that  $\si$ and $w$ are two measures on $\R^N$ satisfying the quadratic $\mathscr{A}_{p,q}$-condition. Suppose $T^{\si}$ is a dyadic shift with complexity $\kappa$, and let $T^{w}$ be the formal adjoint of $T^{\si}$. Then there exists a constant $C$ such that
\begin{equation}\label{single shift}
\|T^{\si}f\|_{L^{q}(w)} \leq C \|f\|_{L^{p}(\si)}
\end{equation}
holds for all $f \in L^{p}(\si)$ if and only if there exist constants $C'$ and $C''$ such that for all sequences $(Q_{i})_{i=1}^{\infty} \subset \D$ of dyadic cubes and all sequences $(a_{i})_{i=1}^{\infty}$ of real numbers the inequalities
\begin{equation}\label{testing 1}
\Big\| \Big( \sum_{i=1}^{\infty} \big(a_{i} 1_{Q_{i}}T^{\si} 1_{Q_{i}}\big)^{2} \Big)^{\frac{1}{2}} \Big\|_{L^{q}(w)}
\leq C' \Big\| \Big( \sum_{i=1}^{\infty} a_{i}^{2}1_{Q_{i}} \Big)^{\frac{1}{2}} \Big\|_{L^{p}(\si)}
\end{equation}
and
\begin{equation}\label{testing 2}
\Big\| \Big( \sum_{i=1}^{\infty} \big(a_{i} 1_{Q_{i}}T^{w} 1_{Q_{i}}\big)^{2} \Big)^{\frac{1}{2}} \Big\|_{L^{p'}(\si)}
\leq C'' \Big\| \Big( \sum_{i=1}^{\infty} a_{i}^{2}1_{Q_{i}} \Big)^{\frac{1}{2}} \Big\|_{L^{q'}(w)}
\end{equation}
hold.

Moreover if $\mathcal{T}^{\si}$ and $\mathcal{T}^{w}$ denote the best possible constants in (\ref{testing 1}) and (\ref{testing 2}), respectively, and $[\si,w]_{p,q}$ is the quadratic $\mathscr{A}_{p,q}$-constant, then the best constant $\|T\|$ in (\ref{single shift}) satisfies
\begin{equation}\label{quantitative}
\|T\| \lesssim(1+\kappa)(\mathcal{T}^{\si} + \mathcal{T}^{w}) + (1+\kappa)^{2}[\si,w]_{p,q}.
\end{equation}
\end{theorem}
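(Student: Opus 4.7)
The plan is to prove both implications; sufficiency is the substantive direction.

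\emph{Necessity.} Apply (\ref{single shift}) to random Rademacher sums $f_{\omega} = \sum_{i} \epsilon_{i}(\omega) a_{i} 1_{Q_{i}}$, take an $L^{q}(d\omega)$-norm of the pointwise-in-$\omega$ inequality, and invoke Kahane's and Khintchine's inequalities on both sides to rewrite the Rademacher averages as the $\ell^{2}$ square functions in (\ref{testing 1}). Since $|1_{Q_{i}} T^{\si} 1_{Q_{i}}| \leq |T^{\si} 1_{Q_{i}}|$ pointwise, this gives $\mathcal{T}^{\si} \lesssim \|T\|$; duality gives $\mathcal{T}^{w} \lesssim \|T\|$. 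The quadratic $\mathscr{A}_{p,q}$-bound is a structural input built into the setup of the earlier sections.

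\emph{Sufficiency: set-up.} Fix $f \in L^{p}(\si)$ and a dual test $g \in L^{q'}(w)$ and expand them in the Haar systems $\{h_{I}^{\si}\}_{I \in \D}$ and $\{h_{J}^{w}\}_{J \in \D}$ adapted to the respective measures, so that
\[
\langle T^{\si} f, g\rangle_{w} = \sum_{K \in \D} \sum_{\substack{I,J \subset K \\ \ell(I)=2^{-i}\ell(K),\ \ell(J)=2^{-j}\ell(K) \\ 0\leq i,j \leq \kappa}} a_{IJK}\, \langle f, h_{I}^{\si}\rangle_{\si}\, \langle g, h_{J}^{w}\rangle_{w}.
\]
Run parallel stopping-time decompositions, producing a family $\F$ of $\si$-principal cubes on which $\langle |f|\rangle_{\si}$ approximately doubles, and analogously a family $\G$ for $g$ with respect to $w$; label each Haar cube $I$ (resp.\ $J$) by its minimal $\F$-parent $\pi_{\F}(I)$ (resp.\ $\G$-parent $\pi_{\G}(J)$).

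\emph{Sufficiency: main estimate.} Split the double sum into \emph{good} contributions, where $\pi_{\F}(I)$ and $\pi_{\G}(J)$ are comparable to the common ancestor $K$ up to the scale gap $\kappa$, and \emph{paraproduct-type remainders} where they are not. On the good piece, for each fixed scale pair $(i,j)$ one collapses the Haar expansions into indicator form and directly applies the quadratic testing inequalities (\ref{testing 1})-(\ref{testing 2}); summing over the $(1+\kappa)^{2}$ scale pairs and applying Cauchy--Schwarz produces the $(1+\kappa)(\mathcal{T}^{\si} + \mathcal{T}^{w})$ term in (\ref{quantitative}). On the remainders the averages of $f$ and $g$ freeze on the stopping cubes, reducing the estimate to a pair of $\ell^{2}$-valued dyadic Carleson embeddings on $L^{p}(\si)$ and $L^{q'}(w)$ whose Carleson-measure condition is precisely the quadratic $\mathscr{A}_{p,q}$-bound; summing once more over scale pairs yields the $(1+\kappa)^{2}[\si,w]_{p,q}$ factor.

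\emph{Main obstacle.} The scalar $L^{2}$-orthogonality exploited in the $p=q=2$ proof of \cite{HPTV} is not available, so one has to carry $\ell^{2}$-valued Haar coefficient sequences throughout the whole argument, and the appropriate boundedness notion for the cube-truncated operator family becomes $\mathcal{R}$-boundedness --- this is exactly why the hypotheses (\ref{testing 1})-(\ref{testing 2}) must be posed in quadratic form. The technical heart of the proof is therefore a vector-valued $L^{p}$-Carleson embedding theorem whose Carleson norm is controlled by $[\si,w]_{p,q}$, together with the verification that the paraproducts produced by the parallel stopping-time surgery meet its hypotheses; this is also where the quadratic $\mathscr{A}_{p,q}$-condition has to be used rather than the weaker scalar $A_{p,q}$-condition mentioned in the abstract.
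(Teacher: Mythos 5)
Your sketch has the right broad ingredients (Kahane--Khinchine for necessity, stopping cubes, Carleson embedding, and a split of $\langle T^{\si}f,g\rangle_{w}$), but the attribution of which hypothesis controls which part of the split is essentially backwards, and this is not a cosmetic point.

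In the paper (as in \cite{HPTV} for $p=q=2$), after reducing to mean-zero $f,g$ and expanding in martingale differences $\de^{\si}_{Q}f$, $\de^{w}_{R}g$, the pairing is split by the \emph{relative position of the cubes $Q$ and $R$}: disjoint, deeply contained ($Q^{(\kappa)}\subsetneq R$), and comparable ($Q\subset R\subset Q^{(\kappa)}$). The paraproduct arises in the deeply contained case: there the sum collapses to $\sum_Q\langle\de_Q^{\si,\kappa}f,\langle g\rangle^{w}_{Q}\de_Q^{\si,\kappa}T^{w}1_{Q}\rangle_{\si}$, the frozen average $\langle g\rangle^{w}_{Q}$ is dominated via principal cubes $\G$, and after the \emph{quadratic testing} estimate on $1_{G}T^{w}1_{G}$ one uses the sparse Carleson embedding (Lemma 2.3) to close. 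Thus the paraproduct is controlled by the testing constant $\mathcal{T}^{w}$, not by $[\si,w]_{p,q}$. The quadratic $\mathscr{A}_{p,q}$-condition enters elsewhere: in the disjoint case (via the crude size estimate $|A_{K}^{\si}f|\le\langle|f|\rangle_{Q}^{\si}\tfrac{\si(Q)}{|Q|}1_{K}$, Stein, and Burkholder) and in the off-child part of the comparable case, and it is precisely this double sum over scale pairs $1\le i,j\le\kappa$ that produces the $(1+\kappa)^{2}$ factor on $[\si,w]_{p,q}$. The $(1+\kappa)$ factor on the testing constants comes from the single scale sum $0\le i\le\kappa$ in the comparable case plus the paraproduct.

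You instead propose a full parallel corona $(\F,\G)$, send the ``good'' pairs to testing, and send the paraproduct remainders to ``a Carleson embedding whose Carleson-measure condition is the quadratic $\mathscr{A}_{p,q}$-bound.'' That last clause does not match how these arguments close: the paper's Carleson embedding (\ref{vector Carleson}) is a one-measure, sparse-collection inequality with no $A_{p}$-input whatsoever, and the $\mathscr{A}_{p,q}$-condition is used separately, through the two-weight Stein inequality (\ref{two weight Stein})/(\ref{square Ap}), to trade a $\si$-average on $Q$ multiplied by $\si(Q)/|Q|$ for the $L^{q}(w)$-norm of a square function. If you try to absorb $[\si,w]_{p,q}$ into the Carleson condition on the paraproduct, the term you have just frozen still carries $T^{\si}1_{F}$ (or $T^{w}1_{G}$), so there is no way to estimate it without invoking testing --- which is exactly why the paraproduct must go to the testing hypothesis. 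Also, the bookkeeping that lets you go from $(1+\kappa)^{2}$ scale pairs on the ``good'' side to a $(1+\kappa)$ power on $\mathcal{T}^{\si}+\mathcal{T}^{w}$ is not explained and is suspicious; in the paper's accounting the $(1+\kappa)^{2}$ is attached to $[\si,w]_{p,q}$ and the $(1+\kappa)$ to the testing constants for concrete structural reasons. A parallel stopping-time scheme is a legitimate alternative route (it is close to the LSSU-style arguments), but you need to reverse the roles of the two hypotheses and let the paraproduct be closed by quadratic testing plus the sparse Carleson embedding, reserving $[\si,w]_{p,q}$ for the off-diagonal/size-dominated contributions.
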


If $p=q=2$, quadratic testing is equivalent with indicator testing and the quadratic $\mathscr{A}_{2,2}$-condition is equivalent with the simple $A_{2}$-condition. Thus, when $p=q=2$, the above theorem reduces to the one proved in \cite{HPTV}. 

As an other novelty in Theorem \ref{two weight shift} we shall actually consider a family $\mathscr{T}$ of dyadic shifts with at most a given complexity $\kappa$. Then it is shown that under the quadratic $\mathscr{A}_{p,q}$-condition, the family is $\mathcal{R}$-bounded with the same quantitative bound as in (\ref{quantitative}) if and only if a quadratic testing condition for the whole family is satisfied. Our proof follows the broad outlines of $L^{2}$-theory but with additional complications coming from the general exponents. We also briefly outline the proof that if the dyadic shifts are of a  special form that arises naturally in the representation theorem concerning general Calder\'on-Zygmund operators, then a certain weakening of the $\mathscr{A}_{p,q}$-condition is sufficient.

It will be shown that this quadratic $\mathscr{A}_{p,q}$-constant is comparable to the constant in the ``two weight Stein's inequality'' for conditional expectations from $L^{p}$ into $L^{q}$ in the same way as the usual $A_{2}$-constant is related to boundedness of conditional expectations in weighted $L^{2}$.  We also construct an example showing that for $p>2$ or $1<q<2$ the $\mathscr{A}_{p,q}$-condition is in general stronger than the simple $A_{p,q}$-condition. Since they are equivalent in the case $1<p\leq 2 \leq q <\infty$, we deduce that the simple $A_{p,q}$-condition is  sufficient for the two weight Stein's inequality if and only $1<p\leq 2 \leq q <\infty$.

The two weight inequality for the dyadic square function was characterized in $L^{2}$ in terms of the Sawyer type testing and the $A_{2}$-condition in another paper by F. Nazarov, S. Treil and A. Volberg \cite{NTV2}. We use similar ideas as with the dyadic shifts and show that the two weight inequality for the dyadic square function holds from $L^{p}$ into $L^{q}$ if and only if the quadratic testing condition and the quadratic $\mathscr{A}_{p,q}$-condition hold, and we get a similar quantitative estimate as with the dyadic shifts. Here again we get the previous result as a special case when $p=q=2$. Our approach to the dyadic square function is inspired by the strategy in \cite{LL}, and similar steps appeared also in \cite{NTV2}.

\subsection*{Acknowledgements}
I am a member of the Finnish Centre of Excellence in Analysis and Dynamics Research. I am very grateful to my PhD advisor Tuomas Hyt\"onen for showing me the idea of quadratic testing and for suggesting the problem of this paper to me. This work is  part of my PhD project. I am also thankful for Timo H\"anninen for teaching me many facts about dyadic shifts.

\section{Set up and preliminaries}

We begin by specifying the basic notation and concepts we use. Two Radon measures $\si$ and $w$ on $\R^{N}$ are fixed. Most of the definitions below are made with respect to the measure $\si$, but it will be clear that they are defined similarly with respect to any Radon measure.

For any $1 \leq p \leq \infty$ the usual $L^{p}$-space with respect to the measure $\si$ is denoted by $L^{p}(\si)$. For a sequence $(f_{i})_{i=1}^{\infty}$ of Borel measurable functions we define
\begin{equation*}
\| (f_{i})_{i=1}^{\infty} \|_{L^{p}(\si;l^{2})}:=\Big( \int \Big( \sum_{i=1}^{\infty} | f_{i} |^{2}\Big)^{\frac{p}{2}} \ud \si \Big)^{\frac{1}{p}},
\end{equation*}
and the space $L^{p}(\si;l^{2})$ consists of those sequences $(f_{i})_{i=1}^{\infty}$ for which this norm is finite. All our functions will be real valued.

We fix a dyadic lattice $\D$ on $\R^{N}$. This means that $\D =\cup_{k\in \Z} \D_{k}$, where each $\D_{k}$ is a disjoint cover of $\R^{N}$ with cubes of the form $x+[0,2^{-k})^{N}, x\in \R^{N}$, and that for every $k \in \Z$ any cube $Q \in \D_{k}$ is a union of $2^{N}$-cubes in $\D_{k+1}$. 

If $ Q \in \D_{k}$, denote by $Q^{(1)}$ the unique cube in $\D_{k-1}$ that contains $Q$, and for any integer $r \geq 2$ define inductively  $Q^{(r)}:= (Q^{(r-1)})^{(1)}$. Write also $Q^{(0)}:=Q$. For $m=0,1,2,\dots$ the collection $ch^{(m)}(Q)$ consists of those $Q' \in \D$ such that $Q'^{(m)}=Q$, and we abbreviate $ch^{(1)}(Q)=:ch(Q)$.  The side length of a cube $Q \in \D_{k}$ is $l(Q):=2^{-k}$, and the volume $l(Q)^{N}$ is written as $|Q|$. 

\subsection*{Martingale decomposition}If $Q \in \D$ is any cube, the average of a locally $\si$-integrable function $f$ over $Q$ is denoted by
\begin{equation*}
\langle f \rangle^{\si}_{Q}:= \frac{1}{\si(Q)} \int_{Q} f \ud \si
\end{equation*}
with the understanding that $\langle f \rangle^\si_Q=0$ if $\si(Q)=0$. For two functions $f$ and $g$ we write $\langle f,g \rangle_\si:= \int f g \ud \si$ whenever the integral makes sense. The \emph{averaging} or \emph{conditional expectation operator} $\E_{k}, k \in \Z$, is defined as
\begin{equation*}
\E_{k}^{\si}f:= \sum_{Q \in \D_{k}} \langle f \rangle^{\si}_{Q}1_{Q}.
\end{equation*}

The martingale difference related to a cube $ Q \in \D$ is defined as
\begin{equation}\label{martingale difference}
\de^{\si}_{Q} f:= \sum_{Q' \in ch(Q)}  \langle f \rangle^{\si}_{Q'} 1_{Q'}- \langle f \rangle^{\si}_{Q} 1_{Q}.
\end{equation}

Let $(\varepsilon_{i})_{i=1}^{\infty}$ a sequence of independent random signs on some probability space $(\Omega, \mathbb{P})$. This means that the sequence is independent and $\mathbb{P}(\varepsilon_{i}=1)=\mathbb{P}(\varepsilon_{i}=-1)=1/2$ for all $i$.  We will use the Kahane-Khinchine inequality \cite{K} saying that for any Banach space $X$, any two exponents $1 \leq p,q< \infty$ and any elements $x_{1}, \dots, x_{M} \in X$ it holds that

\begin{equation}\label{KK}
\Big(\E \| \sum_{i=1}^{M} \varepsilon_{i}x_{i} \|_{X}^{q} \Big)^{\frac{1}{q}}  
\simeq_{p,q} \Big(\E \| \sum_{i=1}^{M} \varepsilon_{i}x_{i} \|_{X}^{p} \Big)^{\frac{1}{p}}, 
\end{equation}
where $\E$ refers to the expectation with respect to the random signs.

The notation $\simeq_{p,q}$ in (\ref{KK}) means that there exists a constant $C>0$ depending only on  $p$ and $q$ and not on $M$, $X$ nor on the elements $x_{i}$ such that if $A$ and $B$ denote the left and right hand sides of (\ref{KK}), respectively, then $C^{-1} B \leq A \leq C B$. The subscript refers to the information that the constant $C$ depends on, and is sometimes omitted. We use this kind of notation only if the constant $C$ does not depend on any relevant information in the situation, and no confusion should arise. Similarly $A \leq C B$ would be written as $A \lesssim B$.

Let $f \in L^{p}(\si)$ for some $1 < p < \infty$. Then we can do the martingale difference decomposition  
\begin{equation}\label{mart. dec.}
f = \sum_{Q \in \D_{l}} \langle f \rangle^{\si}_{Q} 1_{Q}
+\sum_{\begin{substack}{Q \in \D \\ l(Q) \leq 2^{-l}}\end{substack}} \de^{\si}_{Q} f,
\end{equation}
where $l \in \Z$ is any integer, and the series in (\ref{mart. dec.}) converges to $f$ in any order (that is, unconditionally). Burkholder's inequality
\begin{equation}\label{mart. norm}
\| f \|_{L^{p}(\si)}  
\simeq_{p} \Big\| \Big( \sum_{Q \in \D_{l}} | \langle f \rangle^{\si}_{Q} |^{2} 1_{Q}
+\sum_{\begin{substack}{Q \in \D \\ l(Q) \leq 2^{-l}}\end{substack}} | \de^{\si}_{Q} f |^{2}\Big)^{\frac{1}{2}} \Big\| _{L^{p}(\si)},
\end{equation}
implies that 
\begin{equation}\label{Burkholder's theorem}
\|f\|_{L^{p}(\si)} \simeq \E \Big\| \sum_{Q \in \D_{l}} \varepsilon_{Q} \langle f \rangle^{\si}_{Q} 1_{Q}
+\sum_{\begin{substack}{Q \in \D \\ l(Q) \leq 2^{-l}}\end{substack}} \varepsilon_{Q} \de^{\si}_{Q} f \Big\|_{L^{p}(\si)},
\end{equation}
where $\{\varepsilon_{Q}\}_{Q \in \D}$ is a collection of independent random signs. Burkholder's inequality (\ref{mart. norm}) was originally proved in \cite{B} in a little different situation.

From (\ref{Burkholder's theorem}) one can deduce with the Kahane-Khincine inequalities the following lemma for $L^{p}(\si;l^{2})$-norms. Below we shall also call equation (\ref{Burkholder}) Burkholder's inequality.

\begin{lem}
Let $1<p<\infty$ and suppose we have a sequence $ (f_{k})_{k=-\infty}^{\infty}\in L^{p}(\si;l^{2})$. Then we have the estimate
\begin{equation}\label{Burkholder}
\begin{split}
&\| (f_{k})_{k=-\infty}^{\infty} \|_{L^{p}(\si;l^{2})} \\
&\simeq_{p} \Big\| \Big( \sum_{k=-\infty}^{\infty} \sum_{Q \in \D_{l}} | \langle f_{k} \rangle^{\si}_{Q} |^{2} 1_{Q}
+\sum_{k=-\infty}^{\infty}\sum_{\begin{substack}{Q \in \D \\ l(Q) \leq 2^{-l}}\end{substack}} | \de^{\si}_{Q} f_{k} |^{2}\Big)^{\frac{1}{2}} \Big\| _{L^{p}(\si)},
\end{split}
\end{equation}
where $l \in \Z$ is any integer.
\end{lem}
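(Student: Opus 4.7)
The plan is to reduce the vector-valued estimate to the scalar Burkholder inequality (\ref{mart. norm}) by a randomization argument, using Khintchine's inequality to pass between pointwise $l^{2}$-norms and $L^{p}$-averages over independent random signs. First, I would introduce a sequence $\{\varepsilon_{k}\}_{k=-\infty}^{\infty}$ of independent random signs. Applying (\ref{KK}) with $X=\R$ (which, together with the identity $\E|\sum_{k}\varepsilon_{k}a_{k}|^{2}=\sum_{k}a_{k}^{2}$, yields the classical Khintchine inequality) pointwise in $x$, and integrating against $\si$ with Fubini, one obtains
\begin{equation*}
\|(f_{k})_{k}\|_{L^{p}(\si;l^{2})}^{p} \simeq_{p} \E \Big\| \sum_{k}\varepsilon_{k}f_{k} \Big\|_{L^{p}(\si)}^{p}.
\end{equation*}

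Second, for each fixed realization $\omega$, set $g_{\omega}:=\sum_{k}\varepsilon_{k}(\omega)f_{k}$ and apply the scalar Burkholder inequality (\ref{mart. norm}) to $g_{\omega}$. By linearity of the averaging and martingale-difference operators, $\langle g_{\omega}\rangle_{Q}^{\si}=\sum_{k}\varepsilon_{k}(\omega)\langle f_{k}\rangle_{Q}^{\si}$ and $\de_{Q}^{\si}g_{\omega}=\sum_{k}\varepsilon_{k}(\omega)\de_{Q}^{\si}f_{k}$, so after taking expectations I obtain
\begin{equation*}
\E \|g_{\omega}\|_{L^{p}(\si)}^{p} \simeq_{p} \E \int \Big( \sum_{Q\in\D_{l}} \Big|\sum_{k}\varepsilon_{k}\langle f_{k}\rangle_{Q}^{\si}\Big|^{2}1_{Q} + \sum_{\substack{Q\in\D \\ l(Q)\leq 2^{-l}}} \Big|\sum_{k}\varepsilon_{k}\de_{Q}^{\si}f_{k}\Big|^{2}\Big)^{p/2} \ud\si.
\end{equation*}

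Finally, to remove the randomization and recover an $l^{2}$-sum over $k$, I swap $\E$ and $\int$ by Fubini. For fixed $x$, view the coefficients as an $l^{2}$-vector $v_{k}(x)$ whose components are $\langle f_{k}\rangle_{Q}^{\si}1_{Q}(x)$ for $Q\in\D_{l}$ and $\de_{Q}^{\si}f_{k}(x)$ for $Q\in\D$ with $l(Q)\leq 2^{-l}$; then the integrand equals $\|\sum_{k}\varepsilon_{k}v_{k}(x)\|_{l^{2}}^{p}$. By (\ref{KK}) in $X=l^{2}$ together with the orthogonality identity $\E \|\sum_{k}\varepsilon_{k}v_{k}\|_{l^{2}}^{2} = \sum_{k}\|v_{k}\|_{l^{2}}^{2}$,
\begin{equation*}
\E \Big\| \sum_{k}\varepsilon_{k}v_{k}(x) \Big\|_{l^{2}}^{p} \simeq_{p} \Big( \sum_{k}\|v_{k}(x)\|_{l^{2}}^{2} \Big)^{p/2},
\end{equation*}
and integrating against $\si$ produces exactly the right hand side of (\ref{Burkholder}). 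The main technical nuisance will be justifying the Fubini swaps and ensuring that $g_{\omega}\in L^{p}(\si)$ for almost every $\omega$; this can be handled either by first restricting to finitely many $k$ and passing to the limit, or by observing that the chain of equivalences transports finiteness between the two sides of (\ref{Burkholder}) automatically.
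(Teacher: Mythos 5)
Your proof is correct and follows essentially the same strategy as the paper's: randomize over $k$ with Khintchine, reduce to the scalar Burkholder inequality applied to $\sum_k\varepsilon_k f_k$, then derandomize. The only organizational difference is that the paper invokes the randomized form (\ref{Burkholder's theorem}) and therefore deals with a double randomization (over $k$ and over $Q$) via the auxiliary computation (\ref{double Rad}), whereas you apply the square-function form (\ref{mart. norm}) pathwise and collapse the single remaining randomization with Kahane--Khinchine in $X=l^{2}$; both routes reduce to the same ingredients and the paper likewise handles the measurability and Fubini issues by first truncating to finitely many $k$ and finitely many martingale terms.
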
 

\begin{proof}
By monotone convergence we may assume that only finitely many functions $f_{k}$ are non zero. Furthermore, by the martingale convergence, we can suppose that for every $k$ there is only finitely many terms in the martingale decomposition of $f_{k}$. Thus the sums in the following computation are actually finite.

Let $\{\varepsilon_{k}\}_{k \in \Z}$ and $\{\varepsilon'_{Q}\}_{Q \in \D}$ be two sequences of independent  random signs on some distinct probability spaces, and we write $\E$ and $\E'$ for the corresponding expectations. Then we compute with the Kahane-Khinchine inequalities and equation (\ref{Burkholder's theorem}) that
\begin{equation}\label{vector norm}
\begin{split}
&\| (f_{k})_{k=-\infty}^{\infty} \|_{L^{p}(\si;l^{2})} ^{p}
=\Big\| \Big(\E\Big|\sum_{k=-\infty}^{\infty} \varepsilon_{k}f_{k}\Big|^{2}\Big)^{\frac{1}{2}} \Big\|_{L^{p}(\si)}^{p} \\
&\simeq  \E \int_{\R^{N}}\Big|\sum_{k=-\infty}^{\infty} \varepsilon_{k}f_{k}\Big|^{p}\ud \si \\ 
&\simeq  \E  \E' \int_{\R^{N}}\Big|\sum_{k=-\infty}^{\infty} \sum_{Q \in \D_{l}} \varepsilon_{k}\varepsilon'_{Q} \langle f_{k} \rangle^{\si}_{Q}  1_{Q}
+\sum_{k=-\infty}^{\infty}\sum_{\begin{substack}{Q \in \D \\ l(Q) \leq 2^{-l}}\end{substack}} \varepsilon_{k}\varepsilon'_{Q} \de^{\si}_{Q} f_{k} \Big|^{p} \ud \si.
\end{split}
\end{equation}

If $\{c_{k,Q}\}_{k \in \Z, Q\in \D}$ is any doubly indexed finitely non zero set of real numbers, then
\begin{equation}\label{double Rad}
\begin{split}
&\E  \E'\Big|\sum_{k=-\infty}^{\infty} \sum_{Q \in \D} \varepsilon_{k}\varepsilon'_{Q}c_{k,Q} \Big|^{p} 
=\E  \E'\Big| \sum_{Q \in \D}\varepsilon'_{Q} \sum_{k=-\infty}^{\infty} \varepsilon_{k}c_{k,Q} \Big|^{p} \\
&\simeq \E\Big( \E' \Big|\sum_{Q \in \D}\varepsilon'_{Q} \sum_{k=-\infty}^{\infty} \varepsilon_{k}c_{k,Q} \Big|^{2}\Big)^{\frac{p}{2}}
= \E\Big( \E' \Big|\sum_{k=-\infty}^{\infty} \varepsilon_{k} \sum_{Q \in \D}  \varepsilon'_{Q}c_{k,Q} \Big|^{2}\Big)^{\frac{p}{2}} \\
&\simeq  \Big(\E \E' \Big|\sum_{k=-\infty}^{\infty} \varepsilon_{k} \sum_{Q \in \D}  \varepsilon'_{Q}c_{k,Q} \Big|^{2}\Big)^{\frac{p}{2}}
= \Big(\sum_{k=-\infty}^{\infty} \sum_{Q \in \D} |c_{k,Q}|^{2} \Big)^{\frac{p}{2}}.
\end{split}
\end{equation}

Using (\ref{double Rad}) in (\ref{vector norm}) we get the estimate we wanted.

\end{proof}

\subsection*{Principal cubes and Carleson's embedding theorem} We will need the construction of principal cubes. More precisely, suppose $f \in L^{1}_{loc}(\si)$ and take some cube $Q_{0} \in \D$. Set $\s_{0}=\{Q_{0}\}$, and assume that $\s_{0}, \dots, \s_{k}$ are defined for some non negative integer $k$. Then, for $S \in \s_{k}$,  let $ch_{\s}(S)$ consist of the maximal cubes $S' \in \D$ such that $S' \subset S$ and
\begin{equation*}
\langle |f| \rangle_{S'}^{\si} > 2 \langle |f| \rangle_{S}^{\si}.
\end{equation*}
Set $\s_{k+1}:= \cup_{S \in \s_{k}} ch_{\s}(S)$ and 
\begin{equation*}
\s:= \bigcup_{k=0}^{\infty}\s_{k}.
\end{equation*}

Now for every cube $Q\in \D, Q\subset Q_{0}$, there exists a unique smallest $S \in \s$, denoted by $\pi_{\s}Q=S$, that contains $Q$, and it follows from the construction that $\langle |f|\rangle^{\si}_{Q} \leq 2 \langle |f|\rangle^{\si}_{S}.$ 

Let $\gamma \in (0,1)$. We say that a collection $\D_{0} \subset \D$ is $\gamma$-sparse if there exist pairwise disjoint measurable sets $E(Q) \subset Q$, $ Q \in \D_{0}$, such that   $\si(E(Q)) \geq \gamma\si(Q)$ for all $ Q \in \D_{0}$. The collection $\s$ of stopping cubes constructed above is a $\frac{1}{2}$-sparse collection, which is seen by defining $E(S):= S \setminus \bigcup_{S' \in ch_\s(S)}S'$, $S \in \s$. Related to these sparse families we shall use the following form of \emph{Carleson's embedding theorem}:

\begin{lem}

Suppose $1<p<\infty$, $\gamma \in (0,1)$ and $(f_{k})_{k=1}^{\infty} \subset L^{p}(\si;l^{2})$. For each $k$ let $\s_{k}$ be any $\gamma$-sparse collection. Then
\begin{equation}\label{vector Carleson}
\Big\|\Big(\sum_{k=1}^{\infty}\sum_{S \in \s_{k}} \big(\langle f_{k}\rangle^{\si}_{S}\big)^{2}1_{S}\Big)^{\frac{1}{2}} \Big\|_{L^{p}(\si)} 
\lesssim_{\gamma,p} \Big\|\Big(\sum_{k=1}^{\infty} f_{k}^{2}\Big)^{\frac{1}{2}} \Big\|_{L^{p}(\si)}. 
\end{equation}

\end{lem}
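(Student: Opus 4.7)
The plan is to dualise and reduce the problem to the vector-valued Fefferman--Stein maximal inequality, using the $\gamma$-sparse structure of each $\s_k$. For every $k$, fix disjoint witnessing sets $E_k(S)\subset S$, $S\in\s_k$, with $\si(E_k(S))\ge\gamma\si(S)$.

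I first treat the range $p\ge 2$ by $L^{p/2}$-duality. Squaring the left-hand side gives $\|\sum_{k,S\in\s_k}(\langle f_k\rangle_S^\si)^2 1_S\|_{L^{p/2}(\si)}$, and since $p/2\ge 1$ this can be dualised against any non-negative $g\in L^{(p/2)'}(\si)$ of norm at most one. Using $\int_S g\,\ud\si=\langle g\rangle_S^\si\si(S)$, the sparseness $\si(S)\le\gamma^{-1}\si(E_k(S))$, the pointwise maximal function bounds $|\langle f_k\rangle_S^\si|\le M^\si f_k(x)$ and $\langle g\rangle_S^\si\le M^\si g(x)$ valid on $x\in E_k(S)\subset S$, and the disjointness of $\{E_k(S)\}_{S\in\s_k}$ for each fixed $k$, the $S$-sum collapses to
\begin{equation*}
\sum_{k,S}(\langle f_k\rangle_S^\si)^2\langle g\rangle_S^\si\si(S)\le \gamma^{-1}\int_{\R^{N}}\Big(\sum_k (M^\si f_k)^2\Big) M^\si g\,\ud\si.
\end{equation*}
H\"older in $L^{p/2}(\si)\times L^{(p/2)'}(\si)$, the vector-valued Fefferman--Stein inequality $\|(\sum_k (M^\si f_k)^2)^{1/2}\|_{L^p(\si)}\lesssim\|(f_k)\|_{L^p(\si;l^2)}$ for the dyadic maximal operator, and scalar $L^{(p/2)'}(\si)$-boundedness of $M^\si$ then close this case.

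The main obstacle is the range $1<p<2$, where $p/2<1$ makes the above duality step unavailable. I would handle it by a Kahane--Khinchine randomisation: with independent random signs $\{\varepsilon_{k,S}\}$ and Kahane--Khinchine applied pointwise,
\begin{equation*}
\Big\|\Big(\sum_{k,S}(\langle f_k\rangle_S^\si)^2 1_S\Big)^{1/2}\Big\|_{L^p(\si)}^p\simeq_p \E\Big\|\sum_{(k,S)}\varepsilon_{k,S}\langle f_k\rangle_S^\si 1_S\Big\|_{L^p(\si)}^p.
\end{equation*}
For fixed signs the inner linear sum is pointwise dominated by $\sum_k\sum_{S\in\s_k}\langle|f_k|\rangle_S^\si 1_S$, a sum of positive sparse averaging operators. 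A second Kahane--Khinchine step combined with Minkowski's integral inequality applied to the positive kernel of each sparse averaging operator should reduce the problem to the scalar $L^p(\si)$-boundedness of the sparse operator acting on $(\sum_k f_k^2)^{1/2}$, which in turn follows from the same sparseness-and-maximal-function argument as above, now purely scalar and hence valid for all $1<p<\infty$.
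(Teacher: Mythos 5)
Your argument for $p\ge 2$ is correct: $L^{p/2}$--duality against a scalar $g$, the sparseness bound $\si(S)\le\gamma^{-1}\si(E_k(S))$, pointwise maximal-function domination on $E_k(S)$, the disjointness of $\{E_k(S)\}_S$ for fixed $k$, scalar $L^{(p/2)'}$-boundedness of $M^d_\si$ and the vector-valued Fefferman--Stein inequality together close that case. This is slightly different in packaging from the paper but the ingredients are the same.

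Your plan for $1<p<2$, however, has a genuine gap at the pointwise domination step. After the first Kahane--Khinchine the quantity $\E\|\sum_{k,S}\varepsilon_{k,S}\langle f_k\rangle^\si_S 1_S\|^p_{L^p(\si)}$ is, by Kahane--Khinchine, comparable to the left-hand side you started with; so no ground has been gained yet, and any subsequent estimate must be sharp. But dominating, for each fixed sign pattern, $|\sum_{k,S}\varepsilon_{k,S}\langle f_k\rangle^\si_S 1_S|$ by the $\ell^1$-sum $\sum_{k}\sum_{S\in\s_k}\langle|f_k|\rangle^\si_S 1_S$ loses the square-function structure in $k$ irreversibly. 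To see this concretely, take $\s_k=\s$ for all $k\le K$, $f_k=f$ for $k\le K$ and zero otherwise: the left-hand side of \eqref{vector Carleson} is $\sqrt{K}\,\|(\sum_S(\langle f\rangle^\si_S)^2 1_S)^{1/2}\|_{L^p}$, whereas the $\ell^1$-majorant is $K\,\|\sum_S\langle|f|\rangle^\si_S 1_S\|_{L^p}$; no subsequent application of Kahane--Khinchine, Minkowski or positivity can divide out the extra factor of $\sqrt{K}$, and in particular you cannot reach $\|T(\sum_k f_k^2)^{1/2}\|_{L^p}$, which in this example is only $\sqrt{K}\,\|Tf\|_{L^p}$.

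The paper's proof avoids the case split altogether by dualising not $L^{p/2}(\si)$ but the full mixed-norm space $L^p(\si;\ell^2)$ over the double index $(k,S)$. One pairs the left-hand side against an arbitrary finitely non-zero family $\{g_{k,S}\}$, bounds $\sum_{k,S}\int\langle f_k\rangle^\si_S 1_S g_{k,S}\,\ud\si$ by the same sparseness-plus-maximal-function argument you use, and then H\"older in $L^p(\si;\ell^2)\times L^{p'}(\si;\ell^2)$ leaves two square-function factors, one involving $M^d_\si f_k$ and one involving $M^d_\si g_{k,S}$; in the first the disjointness of the $E_k(S)$ collapses the $S$-sum, and both factors are then closed by the vector-valued Fefferman--Stein inequality. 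Since $L^p(\si;\ell^2)$-duality requires only $1<p<\infty$, this yields all exponents in one stroke and is the natural way to repair your proof: replace your scalar test function $g$ by a doubly-indexed $\ell^2$-test family and forgo the reduction to $L^{p/2}$.
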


\begin{proof}

Let $M^{d}_{\si}$ be the dyadic maximal function defined for any Borel measurable $f$ by
\begin{equation*}
M^{d}_{\si}(f)= \sup_{Q \in \D} 1_{Q}\langle |f| \rangle^{\si}_{Q}.
\end{equation*}
For any $k$ and $S \in \s_{k}$ denote again by $E_k(S)$ the measurable subset of $S$ such that $\si(E_k(S)) \geq \gamma\si(S)$ and $E_k(S')\cap E_k(S)=\emptyset$ for any other $S' \in \s_{k}$.

To prove (\ref{vector Carleson}), assume  without loss of generality that every $f_{k}$ is non negative. We want to argue by duality, and for that purpose let $\{g_{k,S}: k =1,2,\dots, \ S \in \s_{k}\}$ be any finitely non zero collection of $L^{p'}(\si)$ functions ($p'$ denotes the H\"older conjugate exponent to $p$). Then
\begin{equation*}
\begin{split}
&\sum_{k=1}^{\infty}\sum_{S \in \s_{k}}\int  \langle f_{k}\rangle^{\si}_{S}1_{S}g_{k,S} \ud \sigma
\leq \gamma^{-1} \sum_{k=1}^{\infty}\sum_{S \in \s_{k}}\langle f_{k}\rangle^{\si}_{S}\langle g_{k,S}\rangle^{\si}_{S}\si(E_k(S)) \\
&\leq \gamma^{-1}\sum_{k=1}^{\infty}\sum_{S \in \s_{k}} \int M^{d}_{\si}(f_{k}) M^{d}_{\si}(g_{k,S})1_{E_k(S)} \ud \si \\
&\leq \gamma^{-1}\Big\|\Big( \sum_{k=1}^{\infty}\sum_{S \in \s_{k}} \big(M^{d}_{\si}(f_{k})\big)^{2}1_{E_k(S)}\Big)^{\frac{1}{2}}\Big\|_{L^{p}(\si)} \\
&\times \Big\|\Big( \sum_{k=1}^{\infty}\sum_{S \in \s_{k}} \big(M^{d}_{\si}(g_{k,S})\big)^{2}1_{E_k(S)}\Big)^{\frac{1}{2}}\Big\|_{L^{p'}(\si)}.
\end{split}
\end{equation*}

Since for a fixed $k$ the sets $E_k(S), S \in \s_{k}$, are pairwise disjoint, the first factor satisfies
\begin{equation*}
\begin{split}
&\Big\|\Big( \sum_{k=1}^{\infty}\sum_{S \in \s_{k}} \big(M^{d}_{\si}(f_{k})\big)^{2}1_{E_k(S)}\Big)^{\frac{1}{2}}\Big\|_{L^{p}(\si)} \\
&\leq \Big\|\Big( \sum_{k=1}^{\infty} \big(M^{d}_{\si}(f_{k})\big)^{2}\Big)^{\frac{1}{2}}\Big\|_{L^{p}(\si)} 
\lesssim_{p} \Big\|\Big( \sum_{k=1}^{\infty}f_{k}^{2}\Big)^{\frac{1}{2}}\Big\|_{L^{p}(\si)},
\end{split}
\end{equation*}
where in the last step we used the dyadic Fefferman-Stein inequality \cite{FS}. In the second factor we may just omit the indicators $1_{E_k(S)}$ and apply the Fefferman-Stein inequality again. These estimates prove (\ref{vector Carleson}).

\end{proof}

\subsection*{Stein's inequality}
Let $(f_{k})_{k=-\infty}^{\infty} \in L^{p}(\si;l^{2}), 1<p<\infty$, be a sequence of functions. \emph{Stein's inequality}, which originally appeared in \cite{St}, says that
\begin{equation}\label{Stein1}
\|(E^{\si}_{k}f_{k})_{k=-\infty}^{\infty} \|_{L^{p}(\si;l^{2})} 
\lesssim_{p}\|(f_{k})_{k=-\infty}^{\infty} \|_{L^{p}(\si;l^{2})}.
\end{equation}
This can equivalently be formulated by saying that for any set $\{f_{Q}\}_{Q \in \D}$, where each $f_{Q}$ is a locally $\si$-integrable function, the inequality
\begin{equation}\label{Stein}
\Big\|\Big(\sum_{Q \in \D}   \big(\langle f_{Q} \rangle^{\si}_{Q}\big)^{2}1_{Q}\Big)^{\frac{1}{2}}\Big\|_{L^{p}(\si)}
\lesssim_{p} \Big\|\Big(\sum_{Q \in \D} f_{Q}^{2}1_{Q}\Big)^{\frac{1}{2}}\Big\|_{L^{p}(\si)}
\end{equation}
holds. Note that (\ref{Stein1}) follows also from the dyadic Fefferman-Stein inequality that was used in the proof of Carleson's embedding theorem.

\section{The quadratic $\mathscr{A}_{p,q}$-condition}\label{square Ap condition}
In this section we introduce the quadratic $\mathscr{A}_{p,q}$-condition and investigate its relation with the Muckenhoupt type $A_{p,q}$-condition. Here the exponents satisfy $1<p,q<\infty$. The quadratic $\mathscr{A}_{p,q}$-condition will be used in the characterization of two weight inequalities for the dyadic square function and the dyadic shifts.

The measures $\si$ and $w$ are said to satisfy the simple- or Muckenhoupt type $A_{p,q}$-condition if
\begin{equation}\label{simple A_p}
(\si,w)_{p,q}:= \sup_{Q \in \D}\frac{\si(Q)^{\frac{1}{p'}}w(Q)^{\frac{1}{q}}}{|Q|} < \infty.
\end{equation}
If $p=q$ we write just $A_{p}$ instead. 

The measures $\si$ and $w$ are said to satisfy the quadratic $\mathscr{A}_{p,q}$-condition if for every collection $\{a_{Q}\}_{Q\in \D}$ of real numbers the inequality
\begin{equation}\label{square Ap}
\Big\|\Big(\sum_{Q \in \D} \Big(a_{Q}\frac{\si(Q)}{|Q|}\Big)^{2}1_{Q}\Big)^{\frac{1}{2}}\Big\|_{L^{q}(w)}
\leq [\si,w]_{p,q} \Big\|\Big(\sum_{Q \in \D} a_{Q}^{2}1_{Q}\Big)^{\frac{1}{2}}\Big\|_{L^{p}(\si)}
\end{equation}
holds, where $[\si,w]_{p,q} \in [0, \infty)$ is the best possible constant. We also write $[\si,w]_{p,q}<\infty$ to mean that the condition holds, and $[\si,w]_{p,q}=\infty$ to mean that it does not hold. It is clear that $(\si,w)_{p,q} \leq [\si,w]_{p,q}$, which follows by taking only one term in the sums in (\ref{square Ap}).

\begin{lem}
Let $1<p,q<\infty$. The quadratic $\mathscr{A}_{p,q}$-condition is symmetric in the sense that $[\si,w]_{p,q} \simeq [w,\si]_{q',p'}$.
\end{lem}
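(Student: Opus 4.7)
The plan is to prove only one direction, $[w,\sigma]_{q',p'} \lesssim [\sigma,w]_{p,q}$, the reverse following from the same argument with $(\sigma,p,q)$ replaced by $(w,q',p')$, since $(p')'=p$ and $(q')'=q$. The strategy is duality in $L^{p'}(\sigma;l^{2})$ combined with Stein's inequality.

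Fix a finitely non-zero sequence of reals $\{b_{Q}\}_{Q \in \D}$ and set $g_{Q}:= b_{Q}\frac{w(Q)}{|Q|}1_{Q}$. By the duality $L^{p'}(\sigma;l^{2})=L^{p}(\sigma;l^{2})^{*}$,
\[
\Big\|\Big(\sum_{Q}b_{Q}^{2}\Big(\frac{w(Q)}{|Q|}\Big)^{2}1_{Q}\Big)^{\frac{1}{2}}\Big\|_{L^{p'}(\sigma)}
=\sup\int \sum_{Q} g_{Q} h_{Q} \, \ud \sigma,
\]
the supremum taken over $(h_{Q})_{Q}$ with $\|(h_{Q})_{Q}\|_{L^{p}(\sigma;l^{2})}\leq 1$. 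Since $g_{Q}$ is supported in $Q$, we may assume $h_{Q}=h_{Q}1_{Q}\geq 0$. Writing $a_{Q}:=\langle h_{Q}\rangle^{\sigma}_{Q}$ and integrating in $\sigma$ produces
\[
\int \sum_{Q} g_{Q} h_{Q} \ud \sigma = \sum_{Q} a_{Q} b_{Q} \frac{\sigma(Q)w(Q)}{|Q|} = \int \sum_{Q} a_{Q}b_{Q}\frac{\sigma(Q)}{|Q|}1_{Q}\, \ud w,
\]
so the roles of $\sigma$ and $w$ are swapped under the sum.

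The rest is routine. A pointwise Cauchy--Schwarz in $Q$ followed by Hölder in $L^{q}(w)$ against $L^{q'}(w)$ gives
\[
\int \sum_{Q} a_{Q}b_{Q}\frac{\sigma(Q)}{|Q|}1_{Q} \ud w
\leq \Big\|\Big(\sum_{Q}a_{Q}^{2}\Big(\frac{\sigma(Q)}{|Q|}\Big)^{2}1_{Q}\Big)^{\frac{1}{2}}\Big\|_{L^{q}(w)}
\Big\|\Big(\sum_{Q}b_{Q}^{2}1_{Q}\Big)^{\frac{1}{2}}\Big\|_{L^{q'}(w)}.
\]
The first factor is estimated by the $\mathscr{A}_{p,q}$-condition for $(\sigma,w)$ as $[\sigma,w]_{p,q}\|(\sum_{Q}a_{Q}^{2}1_{Q})^{1/2}\|_{L^{p}(\sigma)}$, and Stein's inequality (\ref{Stein}) applied to the sequence $(h_{Q})_{Q}$, combined with $h_{Q}1_{Q}=h_{Q}$, bounds the latter by $\|(h_{Q})_{Q}\|_{L^{p}(\sigma;l^{2})}\leq 1$. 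Taking the supremum over $(h_{Q})_{Q}$ yields $[w,\sigma]_{q',p'} \lesssim [\sigma,w]_{p,q}$.

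There is no real obstacle: every ingredient (pointwise Cauchy--Schwarz, two applications of Hölder, Stein's inequality, and $L^{p}$-$L^{p'}$ duality of $l^{2}$-valued spaces) has already been recorded in the preliminaries. The only point requiring minor care is that Stein's inequality be invoked in its $l^{2}$-valued form, which is exactly the content of (\ref{Stein}), and the observation that the duality reduces to the case where each $h_{Q}$ is supported in $Q$ so that $\sum_Q h_Q^{2} 1_Q = \sum_Q h_Q^{2}$.
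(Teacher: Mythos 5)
Your proof is correct and follows essentially the same route as the paper: write the dual pairing $\int \sum_Q b_Q\frac{w(Q)}{|Q|}1_Q h_Q\,\ud\sigma$, use the Fubini-type identity to swap the roles of $\sigma$ and $w$, apply Cauchy--Schwarz and H\"older, invoke the $\mathscr{A}_{p,q}$-condition, and finish with Stein's inequality (\ref{Stein}). The only cosmetic difference is that you explicitly reduce to $h_Q = h_Q 1_Q \geq 0$ before averaging, whereas the paper simply inserts $\langle |f_Q|\rangle^\sigma_Q$ directly, but the two are equivalent.
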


\begin{proof}
Choose any (finitely non zero) collection $\{a_{Q}\}_{Q\in \D}$ of real numbers, and let also $\{f_{Q}\}_{Q \in \D}$ be a collection of $L^{p}(\si)$-functions. Then 
\begin{equation*}
\begin{split}
&\int\sum_{Q \in \D} a_{Q}\frac{w(Q)}{|Q|}1_{Q} f_{Q} \ud \sigma
= \int \sum_{Q \in \D} a_{Q} \frac{ \int_{Q}f_{Q} \ud \sigma}{|Q|}1_{Q} \ud w \\
& \leq \Big\|\Big(\sum_{Q \in \D} a_{Q}^{2}1_{Q}\Big)^{\frac{1}{2}}\Big\|_{L^{q'}(w)}
\Big\|\Big(\sum_{Q \in \D} \Big( \langle |f_{Q}|\rangle^{\si}_{Q} \frac{\si(Q)}{|Q|}\Big)^{2}1_{Q}\Big)^{\frac{1}{2}}\Big\|_{L^{q}(w)} \\
&\leq [\si,w]_{p,q} \Big\|\Big(\sum_{Q \in \D} a_{Q}^{2}1_{Q}\Big)^{\frac{1}{2}}\Big\|_{L^{q'}(w)}
\Big\|\Big(\sum_{Q \in \D} \big(\langle |f_{Q}|\rangle^{\si}_{Q}\big)^{2}1_{Q}\Big)^{\frac{1}{2}}\Big\|_{L^{p}(\si)} \\
& \lesssim [\si,w]_{p,q} \Big\|\Big(\sum_{Q \in \D} a_{Q}^{2}1_{Q}\Big)^{\frac{1}{2}}\Big\|_{L^{q'}(w)}
\Big\|\Big(\sum_{Q \in \D} |f_{Q}1_{Q}|^{2}\Big)^{\frac{1}{2}}\Big\|_{L^{p}(\si)},
\end{split}
\end{equation*} 
where in the last step we used Stein's inequality. By duality this shows that $[w,\si]_{q',p'} \lesssim [\si,w]_{p,q}.$
\end{proof}

For $1<p,q<\infty$ a two weight version of Stein's inequality (\ref{Stein}) can be formulated as
\begin{equation}\label{two weight Stein}
\Big\|\Big(\sum_{Q \in \D} \Big(  \frac{\int_{Q}f_{Q} \ud \si}{|Q|}\Big)^{2}1_{Q}\Big)^{\frac{1}{2}}\Big\|_{L^{q}(w)}
\leq \mathscr{S} \Big\|\Big(\sum_{Q \in \D} f_{Q}^{2}1_{Q}\Big)^{\frac{1}{2}}\Big\|_{L^{p}(\si)},
\end{equation}
where $\{f_{Q}\}_{Q \in \D}$ is again a collection of locally $\si$-integrable functions, and $\mathscr{S}=\mathscr{S}(\si,w,p,q)$ denotes the smallest possible constant with the understanding that it may be infinite.

\begin{lem}\label{square&Stein}
The best constant $\mathscr{S}=\mathscr{S}(\si,w,p,q)$ in (\ref{two weight Stein}) satisfies $\mathscr{S} \simeq [\si,w]_{p,q}$.
\end{lem}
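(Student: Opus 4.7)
The plan is to prove the two inequalities $[\sigma,w]_{p,q} \leq \mathscr{S}$ and $\mathscr{S} \lesssim [\sigma,w]_{p,q}$ separately, both by essentially rewriting one inequality as an instance of the other, with the standard (one weight) Stein's inequality (\ref{Stein}) bridging the gap in the harder direction.

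First I would show $[\sigma,w]_{p,q} \leq \mathscr{S}$ by the obvious specialization. Given any collection $\{a_Q\}_{Q \in \D}$ of real numbers, set $f_Q := a_Q 1_Q$ in (\ref{two weight Stein}). Then $\int_Q f_Q \, \ud \sigma = a_Q \sigma(Q)$, so $|Q|^{-1}\int_Q f_Q \, \ud \sigma = a_Q \sigma(Q)/|Q|$, while $f_Q^2 1_Q = a_Q^2 1_Q$. Substituting these into (\ref{two weight Stein}) recovers exactly (\ref{square Ap}) with constant $\mathscr{S}$, giving $[\sigma,w]_{p,q} \leq \mathscr{S}$.

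For the reverse direction $\mathscr{S} \lesssim [\sigma,w]_{p,q}$, I would start from an arbitrary collection $\{f_Q\}_{Q\in \D}$ and rewrite
\begin{equation*}
\frac{\int_Q f_Q \, \ud \sigma}{|Q|} = \frac{\sigma(Q)}{|Q|}\langle f_Q \rangle^{\sigma}_Q
\end{equation*}
(the identity is vacuous when $\sigma(Q)=0$). Then apply the quadratic $\mathscr{A}_{p,q}$-condition (\ref{square Ap}) with $a_Q := \langle f_Q\rangle^{\sigma}_Q$ to obtain
\begin{equation*}
\Big\|\Big(\sum_{Q \in \D} \Big(\tfrac{\int_Q f_Q \,\ud\sigma}{|Q|}\Big)^{2}1_{Q}\Big)^{\frac{1}{2}}\Big\|_{L^{q}(w)}
\leq [\sigma,w]_{p,q} \Big\|\Big(\sum_{Q \in \D} \big(\langle f_Q\rangle^{\sigma}_Q\big)^{2}1_{Q}\Big)^{\frac{1}{2}}\Big\|_{L^{p}(\sigma)}.
\end{equation*}
Finally, invoke the (one weight) Stein inequality (\ref{Stein}) to replace the averages on the right hand side by the $f_Q$ themselves, paying only a dimensional constant depending on $p$. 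This yields $\mathscr{S} \lesssim_p [\sigma,w]_{p,q}$.

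I do not expect a serious obstacle: both directions are one-line manipulations once the correct test functions are chosen. The only mildly technical point is that the averages $\langle f_Q\rangle^{\sigma}_Q$ can be signed, so strictly speaking one should pass through $|\langle f_Q\rangle^{\sigma}_Q| \leq \langle |f_Q|\rangle^{\sigma}_Q$ before applying Stein's inequality; this is a harmless modification.
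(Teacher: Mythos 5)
Your proof is correct and follows essentially the same route as the paper: the easy direction by specializing $f_Q = a_Q 1_Q$, and the reverse by rewriting $|Q|^{-1}\int_Q f_Q\,\ud\sigma = \langle f_Q\rangle^\sigma_Q\,\sigma(Q)/|Q|$, applying the quadratic $\mathscr{A}_{p,q}$-condition with $a_Q=\langle f_Q\rangle^\sigma_Q$, and finishing with Stein's inequality (\ref{Stein}). Your remark about passing to $|\langle f_Q\rangle^\sigma_Q|\le\langle|f_Q|\rangle^\sigma_Q$ is a minor point the paper leaves implicit, and it is handled exactly as you suggest.
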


\begin{proof}
That $[\si,w]_{p,q} \leq \mathscr{S}(\si,w,p,q)$ follows from (\ref{two weight Stein}) with the special functions $f_{Q}=a_{Q}1_{Q}$, where $a_{Q} \in \R$. To see that $\mathscr{S}(\si,w,p,q) \lesssim [\si,w]_{p,q}$, choose any  set $\{f_{Q}\}_{Q \in \D}$ of locally $\si$-integrable functions. Then
\begin{equation*}	
\begin{split}
&LHS(\ref{two weight Stein}) = \Big\|\Big(\sum_{Q \in \D} \Big( \langle f_{Q}\rangle^{\si}_{Q} \frac{\si(Q)}{|Q|}\Big)^{2}1_{Q}\Big)^{\frac{1}{2}}\Big\|_{L^{q}(w)}\\
&\leq [\si,w]_{p,q} \Big\|\Big(\sum_{Q \in \D}  \big(\langle f_{Q}\rangle^{\si}_{Q}\big)^{2} 1_{Q}\Big)^{\frac{1}{2}}\Big\|_{L^{p}(\si)} 
 \lesssim [\si,w]_{p,q} \Big\|\Big(\sum_{Q \in \D}  f_{Q}^{2} 1_{Q}\Big)^{\frac{1}{2}}\Big\|_{L^{p}(\si)},
\end{split}
\end{equation*}
where we used Stein's inequality (\ref{Stein}) in the last step. Hence also $[\si,w]_{p,q} \lesssim \mathscr{S}(\si,w,p,q)$.
\end{proof}

The next lemma shows that the quadratic $\mathscr{A}_{p,q}$-condition is actually equivalent with the simple $A_{p,q}$-condition in the case $1<p\leq 2 \leq q <\infty$, and a similar remark will apply to the quadratic testing conditions below.

\begin{lem}\label{simple implies square}
If $1< p\leq 2 \leq q<\infty$, then $[\si,w]_{p,q}=(\si,w)_{p,q}$.
\end{lem}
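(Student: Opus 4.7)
The plan is to prove the non-trivial direction $[\si,w]_{p,q} \leq (\si,w)_{p,q}$; the opposite inequality has already been noted. The strategy exploits the hypotheses $q \geq 2$ and $p \leq 2$ separately, each through a Minkowski-type inequality at exponent $q/2$ or $p/2$, to collapse both the $L^q(w;\ell^2)$ and $L^p(\si;\ell^2)$ norms in (\ref{square Ap}) into weighted $\ell^2$-sums over cubes. Once this reduction is in place, the pointwise simple $A_{p,q}$ bound closes the estimate term by term.

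Fix a collection $\{a_Q\}_{Q\in\D}$ and set $b_Q := a_Q \si(Q)/|Q|$. For the left-hand side, since $q/2 \geq 1$ I apply ordinary Minkowski in $L^{q/2}(w)$ to the non-negative summands $b_Q^2 1_Q$:
\begin{equation*}
\Big\| \Big(\sum_Q b_Q^2 1_Q\Big)^{1/2} \Big\|_{L^q(w)}^{2}
= \Big\| \sum_Q b_Q^2 1_Q \Big\|_{L^{q/2}(w)}
\leq \sum_Q b_Q^2 \, w(Q)^{2/q}.
\end{equation*}
For the right-hand side, since $p/2 \leq 1$ I use the reverse Minkowski inequality in $L^{p/2}(\si)$ on the same kind of non-negative sum:
\begin{equation*}
\Big\| \Big(\sum_Q a_Q^2 1_Q\Big)^{1/2} \Big\|_{L^p(\si)}^{2}
= \Big\| \sum_Q a_Q^2 1_Q \Big\|_{L^{p/2}(\si)}
\geq \sum_Q a_Q^2 \, \si(Q)^{2/p}.
\end{equation*}

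To finish, I invoke the pointwise simple $A_{p,q}$-inequality on each cube: factoring
\begin{equation*}
b_Q^2 \, w(Q)^{2/q} \;=\; \Big(\frac{\si(Q)^{1/p'} w(Q)^{1/q}}{|Q|}\Big)^{2} a_Q^2\, \si(Q)^{2/p} \;\leq\; (\si,w)_{p,q}^{2}\, a_Q^2\, \si(Q)^{2/p},
\end{equation*}
and summing over $Q$, I conclude
\begin{equation*}
\Big\| \Big(\sum_Q b_Q^2 1_Q\Big)^{1/2} \Big\|_{L^q(w)}^{2}
\leq (\si,w)_{p,q}^{2} \Big\| \Big(\sum_Q a_Q^2 1_Q\Big)^{1/2} \Big\|_{L^p(\si)}^{2},
\end{equation*}
which is exactly $[\si,w]_{p,q} \leq (\si,w)_{p,q}$.

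I do not foresee any real obstacle. The only mildly non-standard ingredient is the reverse Minkowski inequality for $L^{p/2}$ with $p/2 \leq 1$, but this is a well-known consequence of the integral Minkowski inequality applied with the counting measure indexed by $Q$, valid for non-negative summands. Both hypotheses $p \leq 2$ and $q \geq 2$ are used crucially and in a symmetric way; it is precisely outside this range that the two Minkowski steps fail, which is consistent with the author's announced counterexample for $p > 2$ or $q < 2$.
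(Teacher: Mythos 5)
Your proof is correct and follows essentially the same route as the paper: the paper invokes the type-2 inequality for $L^q$ ($q\geq 2$) and the cotype-2 inequality for $L^p$ ($p\leq 2$), which for non-negative summands of the form $c_Q 1_Q$ are exactly your Minkowski inequality at exponent $q/2\geq 1$ and reverse Minkowski inequality at exponent $p/2\leq 1$, combined with the same termwise application of the simple $A_{p,q}$ bound. The only difference is cosmetic — you square both sides before applying the Minkowski steps, whereas the paper keeps square roots throughout.
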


\begin{proof}
This follows from the fact that $L^{p}$-spaces have certain \emph{type} and \emph{cotype} properties. For our purposes it is not necessary to define these in general, but it suffices to note that for any sequence $(f_{k})_{k=1}^{\infty} \subset L^{p}(\si;l^{2}), 1<p\leq 2$, it holds that
\begin{equation}\label{cotype}
\Big\|\Big(\sum_{k =1}^{\infty} f_{k}^{2}\Big)^{\frac{1}{2}}\Big\|_{L^{p}(\si)}
\geq \Big( \sum_{k =1}^{\infty} \|f_{k}\|_{L^{p}(\si)}^{2} \Big)^{\frac{1}{2}},
\end{equation}
and for any sequence  $(g_{k})_{k=1}^{\infty} \subset L^{q}(\si;l^{2}), 2\leq q<\infty,$ 
\begin{equation}\label{type}
\Big\|\Big(\sum_{k =1}^{\infty} g_{k}^{2}\Big)^{\frac{1}{2}}\Big\|_{L^{q}(\si)}
\leq \Big( \sum_{k =1}^{\infty} \|g_{k}\|_{L^{q}(\si)}^{2} \Big)^{\frac{1}{2}}.
\end{equation} 
Of course these inequalities are independent of the measure.

Suppose then that the simple $A_{p,q}$-condition holds with $1<p\leq 2\leq q<\infty$,  and let $\{a_{Q}\}_{Q \in \D}\subset \R$ be any collection. Then
\begin{equation}\label{simple strong}
\begin{split}
&\Big\|\Big(\sum_{Q \in \D} \Big(a_{Q}\frac{\si(Q)}{|Q|}\Big)^{2}1_{Q}\Big)^{\frac{1}{2}}\Big\|_{L^{q}(w)}
\leq \Big( \sum_{Q \in \D} \Big\|a_{Q}\frac{\si(Q)}{|Q|}1_{Q}\Big\|_{L^{q}(w)}^{2} \Big)^{\frac{1}{2}} \\
&\leq (\si,w)_{p,q} \Big( \sum_{Q \in \D} \big\|a_{Q}1_{Q}\big\|_{L^{p}(\si)}^{2} \Big)^{\frac{1}{2}} 
 \leq (\si,w)_{p,q}\Big\|\Big(\sum_{Q \in \D} a_{Q}^{2}1_{Q}\Big)^{\frac{1}{2}}\Big\|_{L^{p}(\si)},
\end{split}
\end{equation}
and thus $[\si,w]_{p,q} \leq (\si,w)_{p,q}$.
\end{proof}

\section{The dyadic square function}
In this section we consider the dyadic square function. Let $\{b_{Q}\}_{Q \in \D}$ be a collection of real numbers. For a locally Lebesgue integrable function the generalized dyadic square function is defined by
\begin{equation*}
S_{b}(f):=\Big(\sum_{Q \in \D}\big( b_{Q}\de_{Q}f\big)^{2}\Big)^{\frac{1}{2}},
\end{equation*} 
where $\de_{Q}f$ is the usual martingale difference related to the cube $Q$ as in (\ref{martingale difference}), but with respect to the Lebesgue measure. The ``generalized'' here refers to the coefficients $b_{Q}$, and the usual dyadic square function corresponds to the case $b_{Q}=1$ for all $Q \in \D$.

Now we are interested in the two weight estimate for this operator. Namely, we fix two exponents $1<p,q<\infty$ and want to characterize when there exists a constant $C\geq 0$ such that the inequality
\begin{equation}\label{problem for square}
\Big \| \Big(\sum_{Q\in\D} \big(b_{Q}\de_{Q}(f\si)\big)^{2}\Big)^{\frac{1}{2}} \Big\|_{L^{q}(w)} \leq C \|f\|_{L^{p}(\si)}
\end{equation}
holds for all $f \in L^{p}(\si)$. Here $\de_{Q}(f\si)$ is understood as
\begin{equation*}
\de_{Q} (f\si):= \sum_{Q' \in ch(Q)}  \frac{\int_{Q'} f \ud\si}{|Q'|} 1_{Q'}- \frac{\int_{Q}f \ud\si}{|Q|} 1_{Q}.
\end{equation*}
Denote by $S^{\si}_{b}$ the operator defined for locally $\si$-integrable functions by 
\begin{equation*}
S^{\si}_{b}(f):=\Big(\sum_{Q \in \D}\big( b_{Q}\de_{Q}(f \si)\big)^{2}\Big)^{\frac{1}{2}},
\end{equation*}
and define also for all $Q \in \D$ the localized version
\begin{equation*}
S^{\si}_{b,Q}(f):=\Big(\sum_{\begin{substack}{Q' \in \D: \\ Q' \subset Q}\end{substack}}\big( b_{Q'}\de_{Q'}(f \si)\big)^{2}\Big)^{\frac{1}{2}}.
\end{equation*}

If $u$ and $v$ are two weight functions on $\R$, that is, positive Borel functions, and  $p=q=2$, we have the result from \cite{NTV2} saying that
\begin{equation}\label{problem for square p=2}
\Big \| S_{b}(fu) \Big\|_{L^{2}(v)} \leq C \|f\|_{L^{2}(u)}
\end{equation}
holds if and only if there exists a constant $C'$ such that
\begin{equation}\label{Sawyer for square} 
\| S_{b}(1_{I}u)\|_{L^{2}(v)} \leq C' \|1_{I}\|_{L^{2}(u)}
\end{equation}
holds for all $I \in \D$. Also in this case the best constants in (\ref{problem for square p=2}) and (\ref{Sawyer for square}) satisfy $C' \simeq C$. Actually a bit more was shown, namely that the two weight inequality holds if and only if a Muckenhoupt type condition for the measures and a localized testing condition hold.

Here we are going to give a characterization for the inequality (\ref{problem for square}) with any exponents $1<p,q<\infty$.  This will be done in terms of a quadratic testing condition and the quadratic $\mathscr{A}_{p,q}$-condition introduced in the last section, and in the case $p=q=2$ the theorem reduces to the result from \cite{NTV2}.

We say that the operator $S_{b}^{\si}$ satisfies the \emph{global quadratic testing condition} (with respect to $p$ and $q$) if there exists a constant $C$ such that for every collection $\{a_{Q}\}_{Q\in \D} \subset \R$ the inequality
\begin{equation}\label{global square}
\Big\|\Big(\sum_{Q \in \D}S^{\si}_{b}(a_{Q}1_{Q})^{2}\Big)^{\frac{1}{2}}\Big\|_{L^{q}(w)}
\leq C \Big\|\Big(\sum_{Q \in \D} a_{Q}^{2}1_{Q}\Big)^{\frac{1}{2}}\Big\|_{L^{p}(\si)}
\end{equation}
holds. The operator $S^{\si}_{b}$ is said to satisfy the \emph{local quadratic testing condition} if it similarly satisfies estimate
\begin{equation}\label{local square}
\Big\|\Big(\sum_{Q \in \D}S_{b,Q}^{\si}(a_{Q}1_{Q})^{2}\Big)^{\frac{1}{2}}\Big\|_{L^{q}(w)}
\leq C \Big\|\Big(\sum_{Q \in \D} a_{Q}^{2}1_{Q}\Big)^{\frac{1}{2}}\Big\|_{L^{p}(\si)}.
\end{equation}
Of course it is equivalent to assume that these inequalities hold for all finitely non zero collections $\{a_{Q}\}_{Q\in \D}$.

We shall modify the quadratic $\mathscr{A}_{p,q}$-conditions according to the coefficients $b_{Q}$. The measures satisfy the $\mathscr{A}_{p,q}^{b}$-condition if for every collection $\{a_{Q}\}_{Q\in \D}$ of real numbers the inequality
\begin{equation}\label{square Ap, generalized}
\Big\|\Big(\sum_{Q \in \D} \Big(a_{Q}b_{Q}\frac{\si(Q)}{|Q|}\Big)^{2}1_{Q}\Big)^{\frac{1}{2}}\Big\|_{L^{q}(w)}
\leq [\si,w]^{b}_{p,q} \Big\|\Big(\sum_{Q \in \D} a_{Q}^{2}1_{Q}\Big)^{\frac{1}{2}}\Big\|_{L^{p}(\si)}
\end{equation}
holds, where again $[\si,w]^{b}_{p,q}$ denotes the best possible constant.

Now we can state the two weight theorem for the dyadic square function as follows:

\begin{theorem}\label{two weight square}
Let $1<p,q<\infty$. The dyadic square function $S^{\si}_{b}$ satisfies the two weight inequality (\ref{problem for square})
if and only if it satisfies the global quadratic testing condition (\ref{global square}) and if and only if it satisfies the local quadratic testing condition (\ref{local square}) and the quadratic $\mathscr{A}^{b}_{p,q}$-condition (\ref{square Ap, generalized}) holds.

In this case the best constant $\|S^{\si}_{b}\|$ in (\ref{problem for square}) satisfies $\|S^{\si}_{b}\| \simeq \mathfrak{S}_{glob} \simeq \mathfrak{S}_{loc}+[\si,w]^{b}_{p,q}$, where $\mathfrak{S}_{glob}$ and $\mathfrak{S}_{loc}$ are the best possible constants in (\ref{global square}) and (\ref{local square}), respectively.

\end{theorem}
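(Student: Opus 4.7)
I establish the chain
\[
(\ref{problem for square})\;\Longrightarrow\;(\ref{global square})\;\Longrightarrow\;(\ref{local square})\text{ and }(\ref{square Ap, generalized})\;\Longrightarrow\;(\ref{problem for square}),
\]
tracking constants so that $\|S^{\si}_{b}\|\simeq\mathfrak{S}_{glob}\simeq\mathfrak{S}_{loc}+[\si,w]^{b}_{p,q}$ emerges.

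For $(\ref{problem for square})\Rightarrow(\ref{global square})$ I Rademacher-linearize: by linearity of each $\de_{Q}$, for any $\{a_{Q}\}$ and independent signs $\{\varepsilon_{Q}\}$ the value $S^{\si}_{b}(\sum_{Q}\varepsilon_{Q}a_{Q}1_{Q})(x)$ is, at each fixed $x$, the $\ell^{2}$-norm of a linear function of $\{\varepsilon_{Q}\}$. A pointwise Kahane--Khinchine in $\ell^{2}$, Fubini, the two-weight inequality applied to $\sum_{Q}\varepsilon_{Q}a_{Q}1_{Q}$, Kahane--Khinchine in the Banach space $L^{p}(\si)$, and Khinchine applied pointwise to the $\varepsilon$-sum reduce the right side to $\|(\sum a_{Q}^{2}1_{Q})^{1/2}\|_{L^{p}(\si)}$, yielding (\ref{global square}). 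The implication $(\ref{global square})\Rightarrow(\ref{local square})$ is immediate from $S^{\si}_{b,Q}\leq S^{\si}_{b}$. For $(\ref{global square})\Rightarrow(\ref{square Ap, generalized})$ I use the pointwise lower bound, valid for every $R\in\D$ and $x\in R$,
\[
\sum_{R'\in ch(R)}\bigl(\de_{R}(1_{R'}\si)(x)\bigr)^{2} \;\geq\; \frac{1}{|R|^{2}}\sum_{R'\in ch(R)}\si(R')^{2} \;\geq\; \frac{\si(R)^{2}}{2^{N}|R|^{2}},
\]
where the first step uses $(2^{N}-1)^{2}\geq 1$ and the second is Cauchy--Schwarz. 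Consequently $\sum_{Q\in ch(R)}S^{\si}_{b}(1_{Q})^{2}\geq b_{R}^{2}\si(R)^{2}/(2^{N}|R|^{2})\cdot 1_{R}$; feeding $a_{Q}:=c_{Q^{(1)}}$ into (\ref{global square}) and using $\sum_{Q}c_{Q^{(1)}}^{2}1_{Q}=\sum_{R}c_{R}^{2}1_{R}$ produces (\ref{square Ap, generalized}) with $[\si,w]^{b}_{p,q}\lesssim\mathfrak{S}_{glob}$.

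The main direction is $(\ref{local square})+(\ref{square Ap, generalized})\Rightarrow(\ref{problem for square})$. Fix $0\leq f\in L^{p}(\si)$ with finite martingale decomposition and build principal cubes $\s$ for $f$. The identity
\[
\de_{Q}(f\si) = \langle f\rangle^{\si}_{Q}\de_{Q}(\si) + \sum_{Q'\in ch(Q)}(\de^{\si}_{Q}f)_{|Q'}\tfrac{\si(Q')}{|Q'|}1_{Q'}
\]
splits $(S^{\si}_{b}f)^{2}\leq 2P+2A$ into a paraproduct $P:=\sum_{Q}b_{Q}^{2}(\langle f\rangle^{\si}_{Q})^{2}\de_{Q}(\si)^{2}$ and a Carleson piece $A:=\sum_{Q}b_{Q}^{2}(\de^{\si}_{Q}f)^{2}m_{Q,+}^{2}$, with $m_{Q,+}|_{Q'}:=\si(Q')/|Q'|$. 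For $P$, dominating $\langle f\rangle^{\si}_{Q}\leq 2\langle f\rangle^{\si}_{\pi_{\s}Q}$ and grouping by stopping root gives $P\leq 4\sum_{S\in\s}(\langle f\rangle^{\si}_{S})^{2}S^{\si}_{b,S}(1_{S})^{2}$; local testing (\ref{local square}) with $a_{S}=\langle f\rangle^{\si}_{S}$ and Carleson's embedding on the $1/2$-sparse family $\s$ then yield $\|\sqrt{P}\|_{L^{q}(w)}\lesssim\mathfrak{S}_{loc}\|f\|_{L^{p}(\si)}$. For $A$, the pointwise bound $m_{Q,+}\leq 2^{N}\si(Q)/|Q|$ reduces the task to controlling $\|(\sum_{Q}(b_{Q}\si(Q)/|Q|)^{2}(\de^{\si}_{Q}f)^{2})^{1/2}\|_{L^{q}(w)}$; Kahane--Khinchine with auxiliary signs $\{\varepsilon_{Q}\}$ turns this into $(\E\|T_{\varepsilon}f\|^{q}_{L^{q}(w)})^{1/q}$ for the linear operator $T_{\varepsilon}f:=\sum_{Q}\varepsilon_{Q}b_{Q}(\si(Q)/|Q|)\de^{\si}_{Q}f$, whose boundedness $L^{p}(\si)\to L^{q}(w)$ with constant $\lesssim[\si,w]^{b}_{p,q}$ follows by duality against $g\in L^{q'}(w)$, expansion of each $\int(\de^{\si}_{Q}f)g\,dw$ into child-averages of $g$, and application of the functional extension of $\mathscr{A}^{b}_{p,q}$ (obtained from (\ref{square Ap, generalized}) together with Stein, exactly as in Lemma \ref{square&Stein}) and its symmetric dual form, combined with Burkholder (\ref{Burkholder}) on both $f$ and $g$. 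A density argument then extends the estimate to all $f\in L^{p}(\si)$.

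The principal obstacle is the Carleson term $A$: since the $(\de^{\si}_{Q}f)^{2}$ factors are genuine functions, not scalar squares $a_{Q}^{2}$, they cannot be fed directly into the $\mathscr{A}^{b}_{p,q}$-condition as stated. The randomization-plus-duality step above effectively supplies the functional extension of $\mathscr{A}^{b}_{p,q}$ that the proof requires, parallel to how Lemma \ref{square&Stein} extends $\mathscr{A}_{p,q}$ from scalar coefficients to function sequences.
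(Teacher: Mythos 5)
Your argument follows the paper for the necessity of the global testing condition (Kahane--Khinchine linearization), for global $\Rightarrow$ local, and for the paraproduct piece $P$ (principal cubes, local testing, Carleson embedding); your derivation of (\ref{square Ap, generalized}) from (\ref{global square}) is a correct minor variant of the paper's. The decomposition $\de_{Q}(f\si)=\langle f\rangle^{\si}_{Q}\de_{Q}(\si)+m_{Q,+}\cdot\de^{\si}_{Q}f$ is the same one the paper uses.

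The gap is in the Carleson piece $A=\sum_{Q}b_{Q}^{2}(\de^{\si}_{Q}f)^{2}m_{Q,+}^{2}$. After the bound $m_{Q,+}\leq 2^{N}\si(Q)/|Q|$ you would need
\[
\Big\|\Big(\sum_{Q}\Big(b_{Q}\frac{\si(Q)}{|Q|}\Big)^{2}(\de^{\si}_{Q}f)^{2}\Big)^{\frac12}\Big\|_{L^{q}(w)}\lesssim [\si,w]^{b}_{p,q}\,\|f\|_{L^{p}(\si)},
\]
and this is false. Take $N=1$, $Q_{0}=[0,1)$ with children $Q_{1}=[0,\tfrac12)$, $Q_{2}=[\tfrac12,1)$, set $\si(Q_{1})=\epsilon$, $\si(Q_{2})=1-\epsilon$, $w(Q_{1})=1$, $w(Q_{2})=0$, and $b_{Q_{0}}=1$ with $b_{Q}=0$ otherwise; then $[\si,w]^{b}_{p,q}=1$. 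For $f=1_{Q_{1}}-\frac{\epsilon}{1-\epsilon}1_{Q_{2}}$ one has $\de^{\si}_{Q_{0}}f=f$ and all other martingale differences vanish, the left side equals $\|f\|_{L^{q}(w)}=1$, while $\|f\|_{L^{p}(\si)}\simeq\epsilon^{1/p}\to 0$. Note that the original $\|\sqrt{A}\|_{L^{q}(w)}$, with $m_{Q_{0},+}$ kept intact, equals $2\epsilon$, and (\ref{problem for square}) holds here with constant $1$: the damage comes entirely from discarding $m_{Q,+}$ in favour of $2^{N}\si(Q)/|Q|$ on children where $\si$ is small. The randomization-plus-duality device cannot repair this, because the ``functional extension'' of (\ref{square Ap, generalized}) you invoke --- arbitrary functions $f_{Q}$ supported on $Q$ in place of scalars $a_{Q}$ --- is strictly stronger than the $\mathscr{A}^{b}_{p,q}$-condition and fails in the same example. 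Lemma \ref{square&Stein} does not provide it: it extends scalars only to the averaged quantities $\langle f_{Q}\rangle^{\si}_{Q}$, which are again constants on $Q$.

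The one extra idea the paper supplies for this term is to replace $\de^{\si}_{Q}f$ by its $\si$-average \emph{in the same pointwise estimate}: for $x\in Q'\in ch(Q)$,
\[
\big|b_{Q}\,\de_{Q}\big((\de^{\si}_{Q}f)\si\big)(x)\big|
=|b_{Q}|\,\big|(\de^{\si}_{Q}f)|_{Q'}\big|\,\frac{\si(Q')}{|Q'|}
\leq 2^{N}\,|b_{Q}|\,\langle|\de^{\si}_{Q}f|\rangle^{\si}_{Q}\,\frac{\si(Q)}{|Q|},
\]
a constant on $Q$. Then $a_{Q}:=\langle|\de^{\si}_{Q}f|\rangle^{\si}_{Q}$ are genuine scalars, (\ref{square Ap, generalized}) applies directly, and Stein's inequality (\ref{Stein}) followed by Burkholder (\ref{mart. norm}) gives $\|\sqrt{A}\|_{L^{q}(w)}\lesssim[\si,w]^{b}_{p,q}\|f\|_{L^{p}(\si)}$ with no randomization or duality.
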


Let us discuss the case $p=q=2$, or more generally the case $1<p\leq 2\leq q<\infty$. Similarly as we noted above in Lemma \ref{simple implies square}, then the $\mathscr{A}^{b}_{p,q}$-condition is equivalent to assuming
\begin{equation*}
\sup_{Q \in \D} |b_{Q}| \frac{\si(Q)^{\frac{1}{p'}}w(Q)^{\frac{1}{q}}}{|Q|} \lesssim 1.
\end{equation*}
The same kind of computation shows that the quadratic testing conditions are equivalent to the corresponding Sawyer type testing conditions. For example considering the global testing (\ref{global square}), this means that it is enough to assume just 
\begin{equation*} 
\Big \| S^{\si}_{b}(1_{Q})\Big\|_{L^{q}(w)} \leq C \si(Q)^{\frac{1}{p}}
\end{equation*}
uniformly for all $Q \in \D$.

With these facts Theorem \ref{two weight square} reduces to the result proved in \cite{NTV2} when $p=q=2$.

\begin{proof}[Proof of Theorem \ref{two weight square}]

We begin by showing that the global, and hence also the local testing condition is a necessary consequence of the two weight inequality (\ref{problem for square}). Then we show that the global testing implies the quadratic $\mathscr{A}^{b}_{p,q}$-condition. The main part of the proof is in showing that the local  testing and the $\mathscr{A}^{b}_{p,q}$-condition are also sufficient for (\ref{problem for square}).

\subsection*{Necessity of the testing conditions}  This is very much like a classical theorem of Marcinkiewicz and Zygmund \cite{MZ}, which says that bounded linear operators in $L^{p}$-spaces have an extension into a vector valued situation. 
Choose a sequence $(f_{k})_{k=1}^{l} \subset L^{p}(\si)$ and let $(\varepsilon_{k})_{k=1}^{l}$ be a sequence of independent random signs.
Then we compute with the Kahane-Khinchine inequalities that

\begin{equation}\label{vector extension}
\begin{split}
&\Big\|\Big(\sum_{k=1}^{l}|S^{\si}_{b}(f_{k})|^{2}\Big)^{\frac{1}{2}}\Big\|_{L^{q}(w)}
=\Big \| \Big( \sum_{Q \in \D} \sum_{k=1}^{l} |b_{Q}\de_{Q}(f_{k}\si)|^{2} \Big)^{\frac{1}{2}} \Big\|_{L^{q}(w)} \\
&=\Big \| \Big(  \sum_{Q \in \D} \E \Big| \sum_{k=1}^{l} \varepsilon_{k}b_{Q}\de_{Q}(f_{k}\si)\Big|^{2} \Big)^{\frac{1}{2}} \Big\|_{L^{q}(w)} \\
& =\Big \| \Big(\E\Big\|\big\{\sum_{k=1}^{l} \varepsilon_{k}b_{Q}\de_{Q}(f_{k}\si)\big\}_{Q \in \D}\Big\|_{l^{2}}^{2}\Big)^{\frac{1}{2}}\Big\|_{L^{q}(w)} \\
&\simeq \Big(\E \Big \| \Big\|\big\{\sum_{k=1}^{l} \varepsilon_{k}b_{Q}\de_{Q}(f_{k}\si)\big\}_{Q \in \D}\Big\|_{l^{2}}\Big\|^{q}_{L^{q}(w)}\Big)^{\frac{1}{q}} \\
&\simeq \E \Big \| \Big\|\big\{\sum_{k=1}^{l} \varepsilon_{k}b_{Q}\de_{Q}(f_{k}\si)\big\}_{Q \in \D}\Big\|_{l^{2}}\Big\|_{L^{q}(w)},
\end{split}
\end{equation}
where at the first ``$\simeq$'' we used Kahane-Khinchine inequality in $l^{2}$ and at the second in $L^{q}(w;l^{2})$. Linearity of the martingale differences and the assumed two weight inequality (\ref{problem for square}) imply
\begin{equation}\label{vector extension continued}
\begin{split}
&RHS(\ref{vector extension}) = \E \Big\| S^{\si}_{b}\big(\sum_{k=1}^{l} \varepsilon_{k}f_{k}\big)\Big\|_{L^{q}(w)}\\
& \leq \|S^{\si}_{b}\|\E\Big\|\sum_{k=1}^{l} \varepsilon_{k}f_{k}\Big\|_{L^{p}(\si)}
\simeq \|S^{\si}_{b}\|\Big\|\Big(\sum_{k=1}^{l} f_{k}^{2}\Big)^{\frac{1}{2}}\Big\|_{L^{p}(\si)},
\end{split}
\end{equation}
where at the ``$\simeq$'' we used   Kahane-Khinchine inequality first in $L^{p}(\si)$ and then in $\R$. With (\ref{vector extension}) and (\ref{vector extension continued}) it is seen that the two weight inequality (\ref{problem for square}) implies the global quadratic testing condition (\ref{global square}).

\subsection*{Global testing implies the $\mathscr{A}^{b}_{p,q}$-condition}

For any $Q \in \D$ let $\{Q_{k}\}_{k=1}^{2^{N}}$ be its dyadic children. If $Q \in \D$ and $k \in \{1,\dots,2^{N}\}$, then 

\begin{equation*}
\frac{\si(Q_{k})}{|Q_{k}|} \lesssim |\de_{Q}(1_{Q_{k}}\si)(x) |
\end{equation*} 
for any $x \in Q$, and thus 
\begin{equation*}
 |a_{Q}b_{Q}|\frac{\si(Q_{k})}{|Q_{k}|}1_{Q} \lesssim  S_{b,Q}^{\si}(a_{Q}1_{Q_{k}}).
\end{equation*}
This leads to 
\begin{equation*}
\begin{split}
&\Big\|\Big(\sum_{Q \in \D} \Big(a_{Q}b_{Q}\frac{\si(Q_{k})}{|Q_{k}|}\Big)^{2}1_{Q}\Big)^{\frac{1}{2}}\Big\|_{L^{q}(w)}
\lesssim \Big\|\Big(\sum_{Q \in \D}S_{b,Q}^{\si}(a_{Q}1_{Q_{k}})^{2}\Big)^{\frac{1}{2}}\Big\|_{L^{q}(w)} \\
& \leq \Big\|\Big(\sum_{Q \in \D}S_{b}^{\si}(a_{Q}1_{Q_{k}})^{2}\Big)^{\frac{1}{2}}\Big\|_{L^{q}(w)}
\leq \mathfrak{S}_{glob} \Big\|\Big(\sum_{Q \in \D} a_{Q}^{2}1_{Q_{k}}\Big)^{\frac{1}{2}}\Big\|_{L^{p}(\si)}\\
&\leq \mathfrak{S}_{glob} \Big\|\Big(\sum_{Q \in \D} a_{Q}^{2}1_{Q}\Big)^{\frac{1}{2}}\Big\|_{L^{p}(\si)}.
\end{split}
\end{equation*}
Since
\begin{equation*}
\Big(\sum_{Q \in \D} \Big(a_{Q}b_{Q}\frac{\si(Q)}{|Q|}\Big)^{2}1_{Q}\Big)^{\frac{1}{2}}
\leq \sum_{k=1}^{2^{N}} \Big(\sum_{Q \in \D} \Big(a_{Q}b_{Q}\frac{\si(Q_{k})}{|Q_{k}|}\Big)^{2}1_{Q}\Big)^{\frac{1}{2}},
\end{equation*}
we get $[\si,w]^{b}_{p,q} \lesssim \mathfrak{S}_{glob}$.

\subsection*{Sufficiency of the local testing and the $\mathscr{A}^{b}_{p,q}$-condition} Now we turn to the main part of the theorem, which consists of showing that the local testing and the $\mathscr{A}^{b}_{p,q}$-condition are sufficient for the estimate (\ref{problem for square}). To this end, fix a function $f \in L^{p}(\si)$. We can assume here that there are only finitely many non zero coefficients $b_{Q}$ in the definition of $S^{\si}_{b}$, and we prove a bound that is independent of this finite number. Of course the original local testing condition implies the same condition for this ``truncated'' square function.

There are at most $2^{N}$ increasing sequences $Q^{i}_{1} \subsetneq Q^{i}_{2} \subsetneq \dots$, $i  =1,\dots,j\leq 2^{N}$, of dyadic cubes in $\D$ such that 
\begin{equation}\label{quadrants}
\R^{N}= \bigcup_{i=1}^{j} \bigcup_{k=1}^{\infty} Q^{i}_{k}
\end{equation}
and 
\begin{equation*}
\bigcup_{k=1}^{\infty} Q^{i}_{k} \cap \bigcup_{k=1}^{\infty} Q^{i'}_{k} =\emptyset
\end{equation*}
if $i \not= i'$. It follows from the properties of dyadic systems that for every cube $Q \in \D$ there exists $i \in \{1,\dots,j\}$ such that $Q \subset \cup_{k=1}^{\infty} Q^{i}_{k}$.

Since there are only finitely many non zero $b_{Q}$s, we can choose indices $k_{1}, \dots, k_{j}$ such that if $b_{Q} \not=0$, then $Q \subset \cup_{i=1}^{j} Q^{i}_{k_{i}}$, and we write $\tilde{Q}_{i}:=Q^{i}_{k_{i}}$. Thus we can assume that the function $f$ is supported on $\cup_{i=1}^{j} \tilde{Q}_{i}$. Since $S_{b}^{\si}f=\sum_{i=1}^{j}S_{b}^{\si}(1_{\tilde{Q}_{i}}f)$, it is enough to bound each of these separately.

The choice of the cubes $\tilde{Q}_{i}$ implies that  $S^{\si}_{b}(1_{\tilde{Q}_{i}})=S_{b,\tilde{Q}_{i}}^{\si}(1_{\tilde{Q}_{i}})$, and thus 
\begin{equation*}
\begin{split}
&\Big\| \langle f \rangle^{\si}_{\tilde{Q}_{i}}S_{b}^{\si}(1_{\tilde{Q}_{i}}) \Big\|_{L^{q}(w)}
= \Big\|  \langle f \rangle^{\si}_{\tilde{Q}_{i}}S_{b,\tilde{Q}_{i}}^{\si}(1_{\tilde{Q}_{i}})\Big\|_{L^{q}(w)} \\
&\leq \mathfrak{S}_{loc} \|  \langle f \rangle^{\si}_{\tilde{Q}_{i}}1_{\tilde{Q}_{i}}\|_{L^{p}(\si)}
\leq  \mathfrak{S}_{loc} \|1_{\tilde{Q}_{i}}f\|_{L^{p}(\si)}.
\end{split}
\end{equation*}
So finally it is enough to fix some $Q^{i}_{k_{i}}=:Q_{0}$, and assume that the function $f$ is supported on $Q_{0}$ and has zero $\si$-average.

We use a similar kind of splitting of the function inside the operator as in \cite{LL}, and a corresponding step appeared also in \cite{NTV2}. Consider some $Q \in \D$. Since the martingale differences $\de^{\si}_{Q}f$ have $\si$-integral zero, the term $\de_{Q}(f \si)$ in the square function can be written as
\begin{equation*}
\de_{Q}(f \si)=\de_{Q}\big((\de^{\si}_{Q}f+\sum_{R: R \supsetneq Q}\de^{\si}_{R}f)\si\big)
=\de_{Q}\big((\de^{\si}_{Q}f)\si\big)+\langle f \rangle^{\si}_{Q}\de_{Q}(1_{Q}\si).
\end{equation*}
Here we used that $f$ has zero average to get $\sum_{R: R \supsetneq Q}\de^{\si}_{R}f1_{Q}=\langle f \rangle^{\si}_{Q}1_{Q}$. Accordingly we split the estimate for the square function into two parts as
\begin{equation}\label{splitting of square}
\begin{split}
&\|S^{\si}_{b}(f)\|_{L^{q}(w)}
\leq \Big\| \Big(\sum_{Q \in \D}\Big( b_{Q}\de_{Q}\big((\de^{\si}_{Q}f) \si\big)\Big)^{2}\Big)^{\frac{1}{2}}\Big\|_{L^{q}(w)} \\
&+  \Big\| \Big(\sum_{Q \in \D}\Big( b_{Q}\langle f \rangle^{\si}_{Q}\de_{Q}(1_{Q}\si)\Big)^{2}\Big)^{\frac{1}{2}}\Big\|_{L^{q}(w)}.
\end{split}
\end{equation}

For the first term in the right hand side of (\ref{splitting of square}) we estimate 
\begin{equation*}
\big|\de_{Q}\big((\de^{\si}_{Q}f)\si\big)\big| 
\lesssim \frac{\int|\de^{\si}_{Q}f|\ud \sigma }{|Q|}1_{Q} 
=\langle |\de^{\si}_{Q}f| \rangle^{\si}_{Q} \frac{\si(Q)}{|Q|}1_{Q}.
\end{equation*}
This together with the $\mathscr{A}^{b}_{p,q}$-condition give
\begin{equation*}
\begin{split}
&\Big\| \Big(\sum_{Q \in \D}\Big( b_{Q}\de_{Q}\big((\de^{\si}_{Q}f) \si\big)\Big)^{2}\Big)^{\frac{1}{2}}\Big\|_{L^{q}(w)}
\lesssim \Big\| \Big(\sum_{Q \in \D} \Big(b_{Q}\frac{\int|\de^{\si}_{Q}f|\ud \sigma }{|Q|}\Big)^{2}1_{Q} \Big)^{\frac{1}{2}}\Big\|_{L^{q}(w)} \\
& \leq [\si,w]^{b}_{p,q} \Big\|\Big(\sum_{Q \in \D} \big(\langle |\de^{\si}_{Q}f| \rangle^{\si}_{Q}\big)^{2}1_{Q}\Big)^{\frac{1}{2}}\Big\|_{L^{p}(\si)} \\
&\lesssim  [\si,w]^{b}_{p,q} \Big\|\Big(\sum_{Q \in \D} \big(\de^{\si}_{Q}f\big)^{2}1_{Q}\Big)^{\frac{1}{2}}\Big\|_{L^{p}(\si)} 
\simeq [\si,w]^{b}_{p,q} \|f\|_{L^{p}(\si)},
\end{split}
\end{equation*}
where the second to last step follows from Stein's inequality (\ref{Stein}), and the last step follows from Burkholder's inequality (\ref{mart. norm}).

The last thing to do is to bound the second term in (\ref{splitting of square}). Let $\F$ be the collection of principal cubes for the function $f$ constructed beginning from the cube $Q_{0}$. 

Note that $\de_{Q}(1_{Q}\si)=\de_{Q}(1_{R}\si)$ for every cube $\D \ni R \supset Q$. Using the principal cubes we estimate
\begin{equation*}
\begin{split}
& \Big\| \Big(\sum_{Q \in \D}\Big( b_{Q}\langle f \rangle^{\si}_{Q}\de_{Q}(1_{Q}\si)\Big)^{2}\Big)^{\frac{1}{2}}\Big\|_{L^{q}(w)} \\
&\lesssim \Big\| \Big(\sum_{F\in \F}\big(\langle |f|\rangle^{\si}_{F}\big)^{2} \sum_{\begin{substack}{Q \in \D: \\ \pi_\F Q=F}\end{substack}} \big(b_{Q}\de_{Q} (1_{F}\si)\big)^{2}\Big)^{\frac{1}{2}}\Big\|_{L^{q}(w)} \\
&\leq\Big\| \Big(\sum_{F\in \F}\big(\langle |f|\rangle^{\si}_{F}\big)^{2} S^{\si}_{b,F}(1_{F})^{2}\Big)^{\frac{1}{2}}\Big\|_{L^{q}(w)} \\
&\leq \mathfrak{S}_{loc} \Big\|\Big(\sum_{F \in \F} \big(\langle |f|\rangle^{\si}_{F}\big)^{2}1_{F}\Big)^{\frac{1}{2}}\Big\|_{L^{p}(\sigma)} 
\lesssim \mathfrak{S}_{loc} \| f \|_{L^{p}(\si)},
\end{split}
\end{equation*}
where the last step follows from Carleson's embedding theorem (\ref{vector Carleson}). 

Note that we actually applied the quadratic testing condition only with a collection that is sparse with respect to the measure $\si$. This concludes the proof of Theorem \ref{two weight square}.
\end{proof}

\section{Dyadic shifts}\label{dyadic shifts}

Now we begin to consider the dyadic shifts. First we give some basic definitions and then we move on to characterize the two weight inequality.

For any interval $I \subset \R$ write $h^{0}_{I}:=|I|^{-\frac{1}{2}} 1_{I}$ and $h^{1}_{I}:= |I|^{-\frac{1}{2}} (1_{I_{l}}-1_{I_{r}})$, where $|I|$ is the length of the interval and $I_{l}$ and $I_{r}$ are the left and right halves of the interval, respectively. The function $h^{0}_{I}$ is called \emph{non cancellative}- and $h^{1}_{I}$ \emph{cancellative Haar function} related to the interval $I$. 

For a cube $Q=I_{1} \times I_{2} \times \dots \times I_{N} \in \D$, where each $I_{i}$ is an interval in $\R$, define for $ \eta \in \{0,1\}^{N}$ the Haar function related to the cube by 
\begin{equation*}
h^{\eta}_{Q}(x_{1}, \dots, x_{N}):= \prod_{i=1}^{N}h_{I_{i}}^{\eta_{i}}(x_{i}).
\end{equation*} If some $\eta_{i}$ is non zero, then $h^{\eta}_{Q}$ is called  cancellative since it has $\int h^{\eta}_{Q} \ud x =0$, and otherwise it is called non cancellative. In any case $ \int | h^{\eta}_{Q}|^{2} \ud x =1.$

Fix two non negative integers $m$ and $n$. For every cube $K\in \D$ suppose we have a linear operator $A^{\si}_{K}$ defined on locally $\si$-integrable functions by
\begin{equation}\label{building block}
A_{K}^{\si}f:= \sum_{\begin{substack}{I,J \in \D: \\ I^{(m)}=J^{(n)}= K}\end{substack}}a_{IJK} \langle f , h_{I}^J\rangle_{\si} h_{J}^I, 
\end{equation}
where $h_{I}^J$ is a Haar function related to the cube (not interval) $I \in \D$ and $h_{J}^I$ is a Haar function related to the cube $J \in \D$. The coefficients $a_{IJK} \in \R$ satisfy $|a_{IJK}| \leq \frac{\sqrt{|I| |J|}}{|K|}$. Here the Haar functions are just some Haar functions, not any specific ones, and hence we do not specify them with the superscript $\eta$. Similarly define the corresponding dual operator 
\begin{equation*}
A_{K}^{w}g:= \sum_{\begin{substack}{I,J \in \D: \\ I^{(m)}=J^{(n)}= K}\end{substack}}a_{IJK} \langle g , h_{J}^I\rangle_{w} h_{I}^J 
\end{equation*}
for locally $w$-integrable functions, where it should be noted that here the functions $h_{I}^J$ and $h_{J}^I$ are in ``opposite'' places. 

As a direct consequence of the size assumption of the coefficients we get for any $f \in L^{1}_{loc}(\si)$ that
\begin{equation}\label{average domination}
| A_{K}^{\si}f|  
\leq \frac{1}{|K|} \int_{K}|f| \ud \sigma 1_{K},
\end{equation}
and a similar estimate holds for $A_{K}^{w}$.

We assume that there are only finitely many $ K \in \D$ such that the coefficients $a_{IJK}$ are non zero. We make this assumption to have the dyadic shift well defined in the general two weight setting, but all the bounds below will be independent of this number.

With the operators $A_{K}^{\si}$ the dyadic shift $T^{\si}$ is defined by
\begin{equation}\label{shift}
T^{\si}f:=\sum_{K \in \D} A^{\si}_{K}f, \ \  \ \ f \in  L^{1}_{loc}(\si),
\end{equation}
and the shift $T^{w}$ is defined analogously with the operators $A^{w}_{K}$. They are formal adjoints of each other in the sense that
\begin{equation*}
\langle T^{\si}f, g \rangle_{w}=\langle f, T^{w}g \rangle_{\si}
\end{equation*}
for all $f \in L^{1}_{loc}(\si)$ and $g \in L^{1}_{loc}(w)$. The shift $T^{\si}$ is said to have \emph{parameters} $(m,n)$, and correspondingly the shift $T^{w}$ has parameters $(n,m)$. The number $\text{max}\{m,n\}$ is the \emph{complexity} of the shift.

Instead of a single dyadic shift we are going to consider a family $\mathscr{T}$ of dyadic shifts with at most a given complexity. Let us first recall the definition of $\mathcal{R}$-bounded operator families as used for example in \cite{W}. Suppose $(\varepsilon_{k})_{k=1}^{\infty}$ is a sequence of independent random signs. If $X$ and $Y$ are two Banach spaces and $\mathscr{T}$ is a family of linear operators from $X$ into $Y$, then $\mathscr{T}$ is said to be $\mathcal{R}$-bounded if there exists a constant $C$ such that for all $U\in \{1,2,\dots\}$, $(T_{u})_{u=1}^{U} \subset \mathscr{T}$ and $(x_{u})_{u=1}^{U}\subset X$ it holds that
\begin{equation}\label{R-boundedness}
\E\Big\| \sum_{u=1}^{U}\varepsilon_{u}T_{u}x_{u}\Big\|_{Y} 
\leq  C\E\Big\| \sum_{u=1}^{U}\varepsilon_{u}x_{u}\Big\|_{X}.
\end{equation}
We denote the smallest possible constant $C$ in \eqref{R-boundedness} by $\mathcal{R}(\mathscr{T})$.

If $X=L^{p}(\si)$ and $Y=L^{q}(w)$ for some $1\leq p,q<\infty$, then similar computations with the Kahane-Khinchine inequality as above with the dyadic square function shows that in this case $\mathcal{R}$-boundedness can be equivalently defined as
\begin{equation}\label{R-boundedness in L^{p}}
\Big\| \Big( \sum_{u=1}^{U} \big( T_{u} f_{u}\big)^{2} \Big)^{\frac{1}{2}} \Big\|_{L^{q}(w)}
\lesssim \mathcal{R}(\mathscr{T}) \Big\| \Big( \sum_{u=1}^{U}  f_{u}^{2} \Big)^{\frac{1}{2}} \Big\|_{L^{p}(\si)},
\end{equation}
where $\mathcal{R}(\mathscr{T})$ is the constant when formulated as in (\ref{R-boundedness}). If $p=q=2$ it is easily seen from (\ref{R-boundedness in L^{p}}) that in this case $\mathcal{R}$-boundedness is equivalent with uniform boundedness. On the other hand from (\ref{R-boundedness}) one sees that if $\mathscr{T}$ consists of  a single operator $T$, then $\mathcal{R}$-boundedness means just the boundedness of $T$.

Let $\mathscr{T}=\{T^{\si}_{\alpha}: \alpha \in \mathscr{A}\}$ be a collection of dyadic shifts. If $T^{\si}_{\alpha} \in \mathscr{T}$, then we write $T^{w}_{\alpha}$ for the corresponding formal adjoint.  We say that the collection $\mathscr{T}$ of dyadic shifts satisfies the (local) quadratic testing condition (with respect to exponents $1<p,q<\infty$) if for every $U \in \{1,2,\dots\}$, all sequences $(a_{u})_{u=1}^{U}\subset \R$, $(T^{\si}_{u})_{u=1}^{U}\subset \mathscr{T}$ and $(Q_{u})_{u=1}^{U} \subset \D$ the inequalities 
\begin{equation}\label{testing direct} 
\Big\| \Big( \sum_{u=1}^{U} \big(a_{u} 1_{Q_{u}}T_{u}^{\si} 1_{Q_{u}}\big)^{2} \Big)^{\frac{1}{2}} \Big\|_{L^{q}(w)}
\leq \mathcal{T}^{\si} \Big\| \Big( \sum_{u=1}^{U} a_{u}^{2}1_{Q_{u}} \Big)^{\frac{1}{2}} \Big\|_{L^{p}(\si)}
\end{equation}
and
\begin{equation}\label{testing dual} 
\Big\| \Big( \sum_{u=1}^{U} \big(a_{u} 1_{Q_{u}}T_{u}^{w} 1_{Q_{u}}\big)^{2} \Big)^{\frac{1}{2}} \Big\|_{L^{p'}(\si)}
\leq \mathcal{T}^{w} \Big\| \Big( \sum_{u=1}^{U} a_{u}^{2}1_{Q_{u}} \Big)^{\frac{1}{2}} \Big\|_{L^{q'}(w)}
\end{equation}
hold, where $\mathcal{T}^{\si}< \infty$ and $\mathcal{T}^{w}<\infty$ are the best possible constants. Note that it is not forbidden in this definition that $T_{u}=T_{u'}$ for some $u\not= u'$. In particular if $\mathscr{T}$ consists only of a single shift, then we get the corresponding quadratic testing condition as above with the dyadic square function.

The two weight theorem for the dyadic shifts is as follows:
\begin{theorem}\label{two weight shift}
Let $1<p,q<\infty$ be two exponents and assume that the measures $\si$ and $w$ satisfy the quadratic $\mathscr{A}_{p,q}$-condition. Suppose $\mathscr{T}$ is a collection of dyadic shifts as in (\ref{shift}) with complexities at most $\kappa$. Then the collection $\mathscr{T}$ is $\mathcal{R}$-bounded from $L^{p}(\si)$ into $L^{q}(w)$ if and only if it satisfies the quadratic testing conditions (\ref{testing direct}) and (\ref{testing dual}), and in this case 
\begin{equation}\label{R-bound}
\mathcal{R}(\mathscr{T}) \lesssim (1+\kappa)(\mathcal{T}^{\si} + \mathcal{T}^{w}) + (1+\kappa)^{2}[\si,w]_{p,q}.
\end{equation}

\end{theorem}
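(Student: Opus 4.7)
This direction mirrors the argument for Theorem \ref{two weight square}. Applying the equivalent formulation \eqref{R-boundedness in L^{p}} of $\mathcal{R}$-boundedness to the sequence $(a_u 1_{Q_u})_{u=1}^U \subset L^p(\si)$ with operators $(T_u^\si)_{u=1}^U \subset \mathscr{T}$, and pointwise multiplying the left-hand side by the indicators $1_{Q_u}$ (which only reduces the $L^q(w;l^2)$-norm), yields \eqref{testing direct} with $\mathcal{T}^\si \lesssim \mathcal{R}(\mathscr{T})$. The dual testing \eqref{testing dual} follows because \eqref{R-boundedness in L^{p}} is self-dual via the pairing between $L^p(\si;l^2)$ and $L^{p'}(\si;l^2)$, so $\mathcal{R}$-boundedness of $\{T_u^\si\}$ transfers to the adjoint family $\{T_u^w\}$.

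\textbf{Sufficiency; set-up.} Fix finite sequences $(T_u^\si)_{u=1}^U \subset \mathscr{T}$, with parameters $(m_u,n_u)$ satisfying $\max(m_u,n_u)\leq \kappa$, $(f_u)_{u=1}^U \subset L^p(\si)$, and by duality $(g_u)_{u=1}^U$ with $\|(\sum_u g_u^2)^{1/2}\|_{L^{q'}(w)} \leq 1$. By \eqref{R-boundedness in L^{p}} it suffices to bound
\begin{equation*}
\sum_{u=1}^U \langle T_u^\si f_u, g_u\rangle_w = \sum_{u=1}^U \sum_{K \in \D} \langle A_{K}^{\si,u} f_u, g_u\rangle_w
\end{equation*}
by the right-hand side of \eqref{R-bound} times $\|(\sum_u f_u^2)^{1/2}\|_{L^p(\si)}$. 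Expand each $f_u$ in its $\si$-Haar system and each $g_u$ in its $w$-Haar system, and introduce principal cube families $\F_u$ for $|f_u|$ (with respect to $\si$) and $\G_u$ for $|g_u|$ (with respect to $w$) via the stopping-time construction of Section~2; both are $\frac{1}{2}$-sparse.

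\textbf{Sufficiency; core splitting.} For each $u$ and each $K\in \D$ set $F_K:=\pi_{\F_u}K$ and $G_K:=\pi_{\G_u}K$, and follow the decomposition of \cite{HPTV}: split into a \emph{principal} (``diagonal'') part in which the $\si$- and $w$-averages of $f_u$, $g_u$ at the scales of $K$'s Haar descendants are comparable with the principal values $\langle|f_u|\rangle^\si_{F_K}$ and $\langle|g_u|\rangle^w_{G_K}$, and an \emph{error} (``off-diagonal'') part in which $K$ is a strict ancestor of at least one of $F_K$, $G_K$. The principal part is controlled by the quadratic testing \eqref{testing direct} applied to the coefficients $(\langle|f_u|\rangle^\si_{F_K})$ indexed by the $\frac{1}{2}$-sparse set $\{(u, F_K) : F_K \in \F_u\}$, with the outer norm absorbed by the vector-valued Carleson embedding \eqref{vector Carleson}; the symmetric piece involving $g_u$ is handled by \eqref{testing dual}. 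The error part uses the pointwise domination \eqref{average domination} to replace $A_K^{\si,u} f_u$ by $\langle|f_u|\rangle^\si_K \cdot 1_K$ and analogously on the $w$-side, after which the quadratic $\mathscr{A}_{p,q}$-condition \eqref{square Ap}, combined with Stein's inequality \eqref{Stein} to reabsorb the $K$-averages, yields the bound proportional to $[\si,w]_{p,q}$.

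\textbf{Main obstacle.} The complexity factors in \eqref{R-bound} arise from pigeonholing over the $(1+\kappa)$ generations separating $K$ from its Haar descendants $I, J$ at depths $m_u, n_u \leq \kappa$: each side contributes one factor $(1+\kappa)$, absorbed into the testing term, and the error term, which must be pigeonholed on both sides, inherits the quadratic $(1+\kappa)^2$. The principal technical difficulty is preserving the $\ell^2$-structure in the index $u$ throughout this multi-scale decomposition: in the $p=q=2$ case of \cite{HPTV}, Haar orthogonality trivializes the interplay of scales, whereas for general $p, q$ one must combine the vector-valued Burkholder inequality \eqref{Burkholder}, the vector-valued Carleson embedding \eqref{vector Carleson} on sparse families, and Stein's inequality \eqref{Stein}, all applied inside the $L^p(\si;l^2)$ and $L^{q'}(w;l^2)$ norms, so that the decomposition closes cleanly and the announced quantitative bound is achieved.
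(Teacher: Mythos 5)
Your necessity direction is correct and is essentially the argument the paper uses. For sufficiency you take a genuinely different route: a corona decomposition organized over the shift cubes $K$, steered by the principal cubes $F_K=\pi_{\F_u}K$ and $G_K=\pi_{\G_u}K$ in the style of \cite{HPTV}. The paper instead follows \cite{H2}: after reducing to functions supported on a single dyadic quadrant with zero average, it expands $f_u$ and $g_u$ in martingale differences and splits $\sum_{u}\sum_{Q,R}\langle T^\si_u\de^\si_Qf_u,\de^w_Rg_u\rangle_w$ by the relative position of the cubes $Q$ and $R$, into ``disjoint'' ($Q\cap R=\emptyset$), ``deeply contained'' ($Q^{(\kappa)}\subsetneq R$), ``comparable'' ($Q\subset R\subset Q^{(\kappa)}$), plus the symmetric cases with $l(Q)>l(R)$. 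This organization pays off precisely in the vector-valued setting you flag as the main obstacle: each case isolates one ingredient (quadratic $\mathscr{A}_{p,q}$ with Stein for disjoint cubes; dual quadratic testing plus principal cubes and vector-valued Carleson for deeply contained cubes; a mixture for the comparable case), the $\ell^2$-sum over $u$ rides along passively inside $L^p(\si;l^2)$ and $L^{q'}(w;l^2)$, and the complexity factors $(1+\kappa)$ and $(1+\kappa)^2$ drop out from plain pigeonholing over $i,j\in\{1,\dots,\kappa\}$. Your corona sketch never demonstrates that the $\ell^2$-in-$u$ overlay actually survives the stopping-time splitting; you acknowledge the issue in the final paragraph, but acknowledging it is not resolving it.

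There is also a concrete error in your decomposition. The condition defining the ``error'' part, ``$K$ is a strict ancestor of at least one of $F_K$, $G_K$'', is vacuous: $F_K=\pi_{\F_u}K$ and $G_K=\pi_{\G_u}K$ are by definition the smallest stopping cubes \emph{containing} $K$, so $F_K\supseteq K$ and $G_K\supseteq K$ always, and $K$ is never a strict ancestor of either. You presumably intended the error part to consist of those $K$ for which some Haar descendant $I$ (resp.\ $J$) of $K$ at depth $m_u$ (resp.\ $n_u$) satisfies $\pi_{\F_u}I\subsetneq F_K$ (resp.\ $\pi_{\G_u}J\subsetneq G_K$), i.e.\ a stopping cube falls strictly between $K$ and the relevant Haar scale. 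As written the off-diagonal set is empty and the error estimate bounds nothing, so the proof does not close even modulo the acknowledged ``main obstacle''.
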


Again before proving the theorem we comment quickly on the case $1<p\leq 2\leq q<\infty$. Similar computations as in (\ref{simple strong}) show that in this case $\mathcal{R}$-boundedness is equivalent to uniform boundedness, the quadratic testing condition reduces to Sawyer type testing and the quadratic $\mathscr{A}_{p,q}$-condition becomes the simple $A_{p,q}$-condition. Thus we get that a dyadic shift $T^{\si}$ is bounded from $L^{p}(\si)$ into $L^{q}(w)$ if and only if the Sawyer type conditions
\begin{equation*}%\label{Sawyer direct}
\|1_{Q}T^{\si}1_{Q} \|_{L^{q}(w)} \leq \mathcal{T}^{\si} \si(Q)^{\frac{1}{p}} 
\end{equation*}
and 
\begin{equation*}%\label{Sawyer dual}
\|1_{Q}T^{w}1_{Q} \|_{L^{p'}(\si)} \leq \mathcal{T}^{w} w(Q)^{\frac{1}{q'}} 
\end{equation*}
hold for all $ Q \in \D$, and the measures satisfy the Muckenhoupt type $A_{p,q}$-condition
\begin{equation*}%\label{A2}
(\si,w)_{p,q}:=\sup_{Q \in \D} \frac{\si(Q)^{\frac{1}{p'}}w(Q)^{\frac{1}{q}}}{|Q|}<\infty.
\end{equation*}  
In this case 
\begin{equation*}
\|T^{\si}\|_{L^{p}(\si) \to L^{q}(w)} \lesssim (1+\kappa)(\mathcal{T}^{\si} + \mathcal{T}^{w}) + (1+\kappa)^{2}(\si,w)_{p,q},
\end{equation*}
which is the result proved in \cite{HPTV} when $p=q=2$.

\begin{proof}[Proof of Theorem \ref{two weight shift}]
Suppose $\mathscr{T}$ is $\mathcal{R}$-bounded,  whence clearly the quadratic testing condition (\ref{testing direct}) is satisfied. Using duality one sees that the collection of formal adjoints of the shifts in $\mathscr{T}$ is $\mathcal{R}$-bounded from $L^{q'}(w)$ into $L^{p'}(\si)$, and thus also (\ref{testing dual}) is satisfied. Hence it is enough to show the sufficiency of the testing conditions.

So we assume that we have a collection $\mathscr{T}$ of dyadic shifts with complexity at most $\kappa$ satisfying the quadratic testing conditions (\ref{testing direct}) and (\ref{testing dual}). For any $U=1,2,\dots$ suppose we have some sequences $(T^{\si}_{u})_{u=1}^{U}\subset \mathscr{T}$ and $(f_{u})_{u=1}^{U}\subset L^{p}(\si)$.  To prove (\ref{R-bound}) it is enough to take an arbitrary sequence $(g_{u})_{u=1}^{U}\subset L^{q'}(w)$ and show that
\begin{equation*}
\begin{split}
&\Big|\sum_{u=1}^{U} \langle T^{\si}_{u}f_{u},g_{u}\rangle_{w}\Big| \\
&\lesssim  \big((1+\kappa)(\mathcal{T}^{\si} + \mathcal{T}^{w}) + (1+\kappa)^{2}(\si,w)_{p,q}\big) \|(f_{u})_{u=1}^{U}\|_{L^{p}(\si;l^{2})}\|(g_{u})_{u=1}^{U}\|_{L^{q'}(w;l^{2})}.
\end{split}
\end{equation*}
For every $u$ we write the corresponding shift as
\begin{equation*}
T^{\si}_{u}f_{u}=\sum_{K \in \D} A^{\si}_{u,K}f_{u} 
= \sum_{K \in \D} \sum_{\begin{substack}{I,J \in \D: \\ I^{(m)}=J^{(n)}= K}\end{substack}}a^{u}_{IJK} \langle f_{u} , h_{I,u}^J\rangle_{\si} h_{J,u}^I.
\end{equation*}

Let again $\cup_{k=1}^{\infty} Q^{i}_{k}$, $i=1,\dots,j\leq 2^{N}$, be the different ``quadrants''  of our dyadic system, as explained
around equation (\ref{quadrants}). Because we assumed that every shift consists of only finitely many operators $A^{\si}_{K}$, we can choose for every $i$ a cube $Q^{i}_{k_{i}}:=\tilde{Q}_{i}$ such that $a^{u}_{IJK}\not=0$ implies  $K \subset \cup_{i=1}^{j} \tilde{Q}_{i}$.
Since the definition of the shift shows that $T^{\si}_{u}(f_{u}1_{\tilde{Q}_{i}})$ is supported on $1_{\tilde{Q}_{i}}$, we have
\begin{equation*}
\sum_{u=1}^{U}\big\langle T^{\si}_{u}f_{u},g_{u}\big\rangle_{w}
=\sum_{i=1}^{j}\sum_{u=1}^{U}\big\langle T^{\si}_u1_{\tilde{Q}_{i}}f_{u},1_{\tilde{Q}_{i}}g_{u}\big\rangle_{w},
\end{equation*}
and it is enough to estimate for each $i$ separately.

Finally we split 
\begin{equation}\label{to zero average}
\begin{split}
&\big\langle T^{\si}_{u}1_{\tilde{Q}_{i}}f_{u},1_{\tilde{Q}_{i}}g_{u}\big\rangle_{w} 
=\Big\langle T^{\si}_{u}\big(1_{\tilde{Q}_{i}}(f_{u}-\langle f_{u} \rangle^{\si}_{\tilde{Q}_{i}})\big),1_{\tilde{Q}_{i}}(g_{u}-\langle g_{u} \rangle^{w}_{\tilde{Q}_{i}})\Big\rangle_{w} \\
&+\Big\langle 1_{\tilde{Q}_{i}}(f_{u}-\langle f_{u} \rangle^{\si}_{\tilde{Q}_{i}}),\langle g_{u} \rangle^{w}_{\tilde{Q}_{i}}T^{w}_{u}1_{\tilde{Q}_{i}}\Big\rangle_{\si}
+ \Big\langle \langle f_{u} \rangle^{\si}_{\tilde{Q}_{i}}T^{\si}1_{\tilde{Q}_{i}},1_{\tilde{Q}_{i}}g_{u}\Big\rangle_{w},
\end{split}
\end{equation}
and the sum over $u$ of the last two terms can be bounded directly with the testing conditions. For example 
\begin{equation*}
\begin{split}
&\Big|\sum_{u=1}^{U} \langle f_{u} \rangle^{\si}_{\tilde{Q}_{i}}\big\langle T^{\si}_{u}1_{\tilde{Q}_{i}},1_{\tilde{Q}_{i}}g_{u}\big\rangle_{w} \Big| \\
& \leq \Big\| \Big( \sum_{u=1}^{ U} \big( \langle f_{u} \rangle^{\si}_{\tilde{Q}_{i}}1_{\tilde{Q}_{i}} T^{\si}_{u}1_{\tilde{Q}_{i}}\big)^{2} \Big)^{\frac{1}{2}} \Big\|_{L^{q}(w)}
\Big\| \Big( \sum_{u=1}^{ U} | 1_{\tilde{Q}_{i}}g_{u}|^{2} \Big)^{\frac{1}{2}} \Big\|_{L^{q'}(w)}\\
& \leq \mathcal{T}^{\si} \Big(\sum_{u=1}^{ U} \big( \langle f_{u} \rangle^{\si}_{\tilde{Q}_{i}} \big)^{2}\Big)^{\frac{1}{2}} \si(\tilde{Q}_{i})^{\frac{1}{p}} 
\|(1_{\tilde{Q}_{i}}g_{u})_{u=1}^{U}\|_{L^{q'}(w;l^{2})},
\end{split}
\end{equation*}
and using the fact that an $l^{2}$-sum of averages is less than the average of the $l^{2}$-sum we get

\begin{equation*}
\Big(\sum_{u=1}^{ U} \big( \langle f_{u} \rangle^{\si}_{\tilde{Q}_{i}} \big)^{2}\Big)^{\frac{1}{2}} \si(\tilde{Q}_{i})^{\frac{1}{p}} 
\leq \Big\langle\Big(\sum_{u=1}^{ U} f_{u}^{2}\Big)^{\frac{1}{2}}\Big\rangle^{\si}_{\tilde{Q_{i}}} \si(\tilde{Q}_{i})^{\frac{1}{p}} 
\leq \|(1_{\tilde{Q}_{i}}f_{u})_{u=1}^{U}\|_{L^{p}(\si;l^{2})}.
\end{equation*}

Thus after these reductions it is enough to fix one cube $Q^{i}_{k_{i}}=:Q_{0}$ and suppose that for every $u$ the functions $f_{u}$ and $g_{u}$ are supported on $Q_{0}$ and have zero averages. Since the shifts $T^{\si}_{u}$ are \emph{a priori} bounded, we can by $L^{p}$-convergence of martingale differences assume that the functions are given by
\begin{equation*}
f_{u}=\sum_{\begin{substack}{Q \in \D: \\ Q \subset Q_{0}}\end{substack}} \de^{\si}_{Q}f_{u}, \ \ 
g_{u}=\sum_{\begin{substack}{Q \in \D: \\ Q \subset Q_{0}}\end{substack}} \de^{w}_{Q}g_{u},
\end{equation*}
where the sums are finite.

Using the martingale decomposition

\begin{equation}\label{split shift}
\sum_{u=1}^{U} \langle T^{\si}_{u}f_{u},g_{u}\rangle_{w}
=\sum_{u=1}^{U}\sum_{Q,R\in \D} \langle T^{\si}_{u}\de^{\si}_{Q}f_{u},\de^{w}_{R}g_{u}\rangle_{w}, 
\end{equation}
we split the the proof into parts depending on the relative positions of the cubes $Q$ and $R$, and this part of the proof follows the outlines in \cite{H2}. The cases ``$l(Q) \leq l(R)$'' and ``$l(Q)>l(R)$'' are treated symmetrically, and here we concentrate on the first. Then, using the maximal possible complexity $\kappa$ of the shifts, we further split into three cases 	`` $Q \cap R =\emptyset$'', ``$Q^{(\kappa)} \subsetneq R$'' and ``$ Q \subset R \subset Q^{(\kappa)}$'', and these are treated separately using different properties of the shifts.

In the summations we understand that we are summing over dyadic cubes, and  we will not always write ``$Q \in \D$'' in the summation condition. Moreover, since we assumed the finite martingale decompositions of $f$ and $g$, we can think that every $Q \in \D$ that appears below will actually belong to some sufficiently big \emph{finite} collection $\D_{0} \subset \D$. This way all the sums are actually finite, and one does not have to worry about any convergence issues.

At this point it is convenient to introduce the notation 
\begin{equation*}
\de^{\si,i}_{Q}f:= \sum_{\begin{substack}{Q'\in \D: \\ Q'^{(i)}=Q}\end{substack}} \de^{\si}_{Q'}f
\end{equation*}
for any $f \in L^{1}_{loc}(\si)$, $Q\in \D$ and $i \in \{ 0,1,2,\dots\}$, and similarly for the measure $w$.

\subsection*{Disjoint cubes; $Q\cap R=\emptyset$ and $l(Q) \leq l(R)$}
Here we bound the part
\begin{equation}\label{case 1}
\Big|\sum_{u=1}^{U}\sum_{\begin{substack}{l(Q) \leq l(R) \\ Q \cap R=\emptyset}\end{substack}} \langle T^{\si}_{u}\de^{\si}_{Q}f_{u},\de^{w}_{R}g_{u}\rangle_{w}\Big|.
\end{equation}
Consider a fixed $u$ first, and suppose the shift $T^{\si}_{u}$ has parameters $(m,n)$ with $m+n\leq \kappa$. Fix two cubes $Q, R \in \D$  with $Q\cap R =\emptyset$ and suppose $ K \in \D$ is such that $\langle A^{\si}_{u,K}\de^{\si}_{Q}f_{u},\de^{w}_{R}g_{u}\rangle_{w}\not=0$.  We must have $Q \cap K \not= \emptyset \not= R \cap K$, which combined with $Q \cap R = \emptyset$ implies that $Q,R \subset K$. Also, since the functions $\de^\si_Qf $ and $\de^w_Rg$ have zero $\si$- and $w$-averages, respectively, and a Haar function $h_I$ is constant on the children of $I$,  we have  $K \subset Q^{(m)}$ and $ K \subset R^{(n)}$. Thus the sum (\ref{case 1}) is actually zero if $m=0$ or $n=0$. Hence we assume $m,n \geq 1$, rearrange the sum in question and estimate with (\ref{average domination}) as
\begin{equation}\label{first appl. of Ap}
\begin{split}
&\sum_{\begin{substack}{l(Q) \leq l(R) \\ Q \cap R=\emptyset}\end{substack}} \big| \big\langle T^{\si}_{u}\de^{\si}_{Q}f_{u},\de^{w}_{R}g_{u}\big\rangle_{w}\big| \\
&\leq \sum_{i=1}^{m}\sum_{j=1}^{n} \sum_{K \in \D} \sum_{\begin{substack}{Q,R \in \D : \\Q^{(i)}=R^{(j)}=K}\end{substack}} 
 \big| \big\langle A_{u,K}^{\si}\de^{\si}_{Q}f_{u}, \de^{w}_{R}g_{u} \big\rangle_{w}\big| \\
& \leq \sum_{i,j=1}^{\kappa} \sum_{K \in \D} \sum_{\begin{substack}{Q,R \in \D : \\Q^{(i)}=R^{(j)}=K}\end{substack}} 
\frac{\|\de^{\si}_{Q}f_{u}\|_{L^{1}(\si)}\|\de^{w}_{R}g_{u}\|_{L^{1}(w)}}{|K|} \\
&=\sum_{i,j=1}^{\kappa} \sum_{K \in \D} \frac{\|\de^{\si,i}_{K}f_{u}\|_{L^{1}(\si)}\|\de^{w,j}_{K}g_{u}\|_{L^{1}(w)}}{|K|}.
\end{split}
\end{equation}
Note that this estimate does not depend on the parameters $(m,n)$ of the shift.

Then for any fixed $i$ and $j$, we sum over $u$, and continue with 
\begin{equation}\label{towards Ap}
\begin{split}
&\sum_{u=1}^{U}\sum_{K \in \D} \frac{\|\de^{\si,i}_{K}f_{u}\|_{L^{1}(\si)}\|\de^{w,j}_{K}g_{u}\|_{L^{1}(w)}}{|K|} \\
&=\int \sum_{u=1}^{U}\sum_{K \in \D} \frac{\|\de^{\si,i}_{K}f_{u}\|_{L^{1}(\si)}}{|K|}|\de^{w,j}_{K}g_{u}| \ud w \\
& \leq \Big\|\Big(\sum_{u=1}^{U}\sum_{K \in \D} \Big(\frac{\|\de^{\si,i}_{K}f_{u}\|_{L^{1}(\si)}}{|K|}\Big)^{2}1_{K}\Big)^{\frac{1}{2}}\Big\|_{L^{q}(w)} \\
&  \cdot \Big\|\Big(\sum_{u=1}^{U}\sum_{K \in \D} \big(\de^{w,j}_{K}g_{u}\big)^{2}1_{K}\Big)^{\frac{1}{2}}\Big\|_{L^{q'}(w)}=:A\cdot B.
\end{split}
\end{equation}
 
Using the quadratic $\mathscr{A}_{p,q}$-condition we get
\begin{equation}\label{application of Ap}
\begin{split}
A=&\Big\|\Big(\sum_{u=1}^{U}\sum_{K \in \D} \Big(\big\langle |\de^{\si,i}_{K}f_{u}|\big\rangle^{\si}_{K}\frac{\si(K)}{|K|}\Big)^{2}1_{K}\Big)^{\frac{1}{2}}\Big\|_{L^{q}(w)} \\
&\leq [\si,w]_{p,q} \Big\|\Big(\sum_{u=1}^{U}\sum_{K \in \D} \big(\big\langle |\de^{\si,i}_{K}f_{u}|\big\rangle^{\si}_{K}\big)^{2}1_{K}\Big)^{\frac{1}{2}}\Big\|_{L^{p}(\si)} \\
& \leq [\si,w]_{p,q} \Big\|\Big(\sum_{K \in \D} \Big(\Big\langle\big(\sum_{u=1}^{U} \big(\de^{\si,i}_{K}f_{u}\big)^{2}\big)^{\frac{1}{2}} \Big\rangle^{\si}_{K}\Big)^{2}1_{K}\Big)^{\frac{1}{2}}\Big\|_{L^{p}(\si)}.
\end{split}
\end{equation}
Applying Stein's inequality (\ref{Stein}) and then Burkholder's inequality (\ref{Burkholder}) to the last term in (\ref{application of Ap}) we have
\begin{equation*}
\begin{split}
&RHS(\ref{application of Ap}) \lesssim [\si,w]_{p,q} \Big\|\Big(\sum_{K \in \D} \sum_{u=1}^{U} \big(\de^{\si,i}_{K}f_{u}\big)^{2} 1_{K}\Big)^{\frac{1}{2}}\Big\|_{L^{p}(\si)} \\
& \lesssim [\si,w]_{p,q} \|(f_{u})_{u=1}^{U}\|_{L^{p}(\si;l^{2})}.
\end{split}
\end{equation*}

The factor $B$ in (\ref{towards Ap}) is estimated directly with Burkholder's inequality, and then it only remains to sum over the finite ranges of $i$ and $j$, which produces a factor $\kappa^{2}$ in the final estimate. Hence we have shown that
\begin{equation*}
(\ref{case 1}) \lesssim \kappa^{2} \cdot [\si,w]_{p,q} \|(f_{u})_{u=1}^{U}\|_{L^{p}(\si;l^{2})}\|(g_{u})_{u=1}^{U}\|_{L^{q'}(w;l^{2})}.
\end{equation*}

\subsection*{Deeply contained cubes; $Q^{(\kappa)} \subsetneq R$}
We consider again a fixed $T^{\si}_{u}$ with parameters $(m,n)$ first.  Assume $Q,R \in \D$ are two cubes such that $Q^{(\kappa)} \subsetneq R$. If $ A^{\si}_{u,K}\de^{\si}_{Q}f_{u}$ is  non zero, we must have $K \subset Q^{(m)} \subset Q^{(\kappa)} \subsetneq R $. Since $A^{\si}_{u,K}\de^{\si}_{Q}f_{u}$ is supported on $K$ and $\de^w_Rg$ is constant on the children of $R$, we see that 
\begin{equation*}
\Big\langle A^{\si}_{u,K}\de^{\si}_{Q}f_{u},\de^{w}_{R}g_{u}\Big\rangle_{w}
=\Big\langle A^{\si}_{u,K}\de^{\si}_{Q}f_{u},\langle\de^{w}_{R}g_{u}\rangle^{w}_{Q^{(\kappa)}}1_{Q^{(\kappa)}}\Big\rangle_{w},
\end{equation*}
and thus
\begin{equation*}
\Big\langle T_{u}^{\si}\de^{\si}_{Q}f_{u},\de^{w}_{R}g_{u}\Big\rangle_{w}
=\Big\langle T_{u}^{\si}\de^{\si}_{Q}f_{u},\langle \de^{w}_{R}g_{u}\rangle^{w}_{Q^{(\kappa)}}1_{Q^{(\kappa)}}\Big\rangle_{w}.
\end{equation*}

Taking ``$Q^{(\kappa)}$'' as a new summation variable we can rewrite the sum to be estimated as
\begin{equation}\label{organize deeply contained}
\begin{split}
&\sum_{\begin{substack}{Q,R \in \D : \\ Q^{(\kappa)}\subsetneq R}\end{substack}} \Big\langle T_{u}^{\si}\de^{\si}_{Q}f_{u},\de^{w}_{R}g_{u}\Big\rangle_{w}
 = \sum_{Q \in \D} \sum_{\begin{substack}{R \in \D : \\R \supsetneq Q}\end{substack}} \sum_{\begin{substack}{Q' \in \D : \\ Q'^{(\kappa)} =Q}\end{substack}}
\Big\langle T_{u}^{\si}\de^{\si}_{Q'}f_{u},\langle \de^{w}_{R}g_{u}\rangle^{w}_{Q}1_{Q}\Big\rangle_{w} \\
& = \sum_{Q \in \D} \Big\langle \de^{\si,\kappa}_{Q}f_{u}, \langle g_{u} \rangle^{w}_{Q} \de^{\si,\kappa}_{Q} T_{u}^{w}1_{Q}\Big\rangle_{\si},
\end{split}
\end{equation}
where we collapsed the sum $\sum_{\begin{substack}{R \in \D : \\R \supsetneq Q}\end{substack}} \langle \de^{w}_{R}g_{u}\rangle^{w}_{Q}1_{Q} = \langle g_{u} \rangle^{w}_{Q} 1_{Q}$, and used the fact that the martingale difference operator $\de^{\si,\kappa}_{Q}$ can be put also to the other side of the pairing $\langle \cdot, \cdot\rangle_{\si}$. Now we have again an equation that is independent of the parameters $(m,n)$, so it holds for all the shifts $T^{\si}_{u}$.

Then we sum over $u$ and estimate up as
\begin{equation}\label{deeply up}
\begin{split}
&\Big|\sum_{u=1}^{U} \sum_{Q \in \D} \Big\langle \de^{\si,\kappa}_{Q}f_{u}, \langle g_{u} \rangle^{w}_{Q} \de^{\si,\kappa}_{Q} T_{u}^{w}1_{Q}\Big\rangle_{\si} \Big| \\
&= \Big | \int \sum_{u=1}^{U} \sum_{Q \in \D}  \de^{\si,\kappa}_{Q}f_{u} \langle g_{u} \rangle^{w}_{Q} \de^{\si,\kappa}_{Q} T_{u}^{w}1_{Q}  \ud\si \Big| \\
&\leq \Big\| \Big(\sum_{u=1}^{U} \sum_{Q \in \D} \big( \de^{\si,\kappa}_{Q}f_{u}  \big)^{2} \Big)^{\frac{1}{2}} \Big\|_{L^{p}(\si)} \\
&\cdot \Big\| \Big( \sum_{u=1}^{U} \sum_{Q \in \D} \big( \langle g_{u} \rangle^{w}_{Q} \de^{\si,\kappa}_{Q} T^{w}_{u}1_{Q}\big)^{2} \Big)^{\frac{1}{2}} \Big\|_{L^{p'}(\si)}, 
\end{split}
\end{equation}
where Burkholder's inequality (\ref{Burkholder}) implies that the first factor is dominated by $\|(f_{u})_{u=1}^{U}\|_{L^{p}(\si;l^{2})}$. 

In the second factor we note that if $\varphi$ is any locally $w$-integrable function, then $\de^{\si,\kappa}_{Q} A_{u,K}^{w} (1_{\complement Q}\varphi) =0$ for any $ Q, K \in \D$, which follows from the fact that the shift has complexity at most $\kappa$. This shows that
\begin{equation}\label{replacing with bigger}
\de^{\si,\kappa}_{Q} T_{u}^{w}1_{Q} = \de^{\si,\kappa}_{Q} T_{u}^{w}1_{P}
\end{equation}
for any $\D \ni P \supset Q$. 

Beginning from the cube $Q_{0}$, construct the sets $\G_{u}$ of principal cubes for the functions $g_{u}$ with respect to the measure $w$. Since the functions $g_{u}$ have finite martingale difference decompositions, and are accordingly constant on sufficiently small cubes $Q \in \D$, the collections $\G_{u}$ are finite. 

With the remark (\ref{replacing with bigger}) we proceed with 
\begin{equation*}
\begin{split}
&\Big\| \Big( \sum_{u=1}^{U} \sum_{Q \in \D} \big( \langle g_{u} \rangle^{w}_{Q} \de^{\si,\kappa}_{Q} T^{w}_{u}1_{Q}\big)^{2} \Big)^{\frac{1}{2}} \Big\|_{L^{p'}(\si)} \\ 
& \lesssim \Big\| \Big( \sum_{u=1}^{U} \sum_{G \in \G_{u}} \big(\langle |g_{u}| \rangle^{w}_{G}\big)^{2} \sum_{\begin{substack}{Q \in \D: \\ \pi_{\G_u} Q = G}\end{substack}} \big(  \de^{\si,\kappa}_{Q} T_{u}^{w}1_{G}\big)^{2} \Big)^{\frac{1}{2}} \Big\|_{L^{p'}(\si)} \\ 
& \lesssim \Big\| \Big( \sum_{u=1}^{U} \sum_{G \in \G_{u}} \big(\langle |g_{u}| \rangle^{w}_{G}1_G T_{u}^{w}1_{G}\big)^{2} \Big)^{\frac{1}{2}} \Big\|_{L^{p'}(\si)} \\
& \leq \mathcal{T}^{w} \Big\| \Big( \sum_{u=1}^{U}\sum_{G\in \G_{u} } \big(\langle | g_{u} | \rangle_{G}^{w} 1_{G}\big)^{2} \Big)^{\frac{1}{2}} \Big\|_{L^{q'}(w)}  
\lesssim \mathcal{T}^{w} \| (g_{u})_{u=1}^{U} \|_{L^{q'}(w;l^{2})},
\end{split}
\end{equation*}
where we used Burkholder's inequality \eqref{Burkholder} in the second step and  Carleson's embedding theorem (\ref{vector Carleson}) in the last step. This concludes the proof for the part ``$Q^{(\kappa)} \subsetneq R$''.

\subsection*{Contained cubes of comparable size; $ Q \subset R \subset Q^{(\kappa)}$} For a fixed $u$, the sum to be estimated in this last subsection  can be written as
\begin{equation}\label{comparable splitting}
\begin{split}
&\sum_{i=0}^{\kappa} \sum_{R \in \D} \sum_{\begin{substack}{Q \in \D: \\ Q^{(i)} = R}\end{substack}} \big\langle T^{\si}_{u}\de^{\si}_{Q}f_{u},\de^{w}_{R}g_{u}\big\rangle_{w} \\
&=\sum_{i=0}^{\kappa} \sum_{k=1}^{2^{N}} \sum_{R \in \D} \Big\langle \de^{\si,i}_{R}f_{u}, \langle \de^{w}_{R}g_{u} \rangle^{w}_{R_{k}}T^{w}_{u}1_{R_{k}}\Big\rangle_{\si} \\
&=\sum_{i=0}^{\kappa} \sum_{k=1}^{2^{N}} \sum_{R \in \D}  \Big\langle  1_{R_{k}}\de^{\si,i}_{R}f_{u} , \langle \de^{w}_{R}g_{u} \rangle^{w}_{R_{k}}T^{w}_{u}1_{R_{k}}\Big\rangle_{\si} \\
&+\sum_{i=0}^{\kappa} \sum_{k=1}^{2^{N}} \sum_{R \in \D} \Big\langle 1_{\complement R_{k}} \de^{\si,i}_{R}f_{u} , \langle \de^{w}_{R}g_{u} \rangle^{w}_{R_{k}}T^{w}_{u}1_{R_{k}}\Big\rangle_{\si},
\end{split}
\end{equation}
where the cubes $R_{k}$ are the dyadic children of $R$.

Consider the  first sum in the right side of (\ref{comparable splitting}). We fix some $i$ and $k$, sum over $u$ and use testing to deduce that
\begin{equation*}
\begin{split}
& \Big| \sum_{u=1}^{U} \sum_{R \in \D}  \Big\langle 1_{R_{k}} \de^{\si,i}_{R}f_{u} , \langle \de^{w}_{R}g_{u} \rangle^{w}_{R_{k}}T^{w}_{u}1_{R_{k}}\Big\rangle_{\si} \Big| \\
&\leq  \Big\| \Big( \sum_{u=1}^{U} \sum_{R \in \D} \big(1_{R_{k}} \de^{\si,i}_{R}f_{u}  \big)^{2} \Big)^{\frac{1}{2}} \Big\|_{L^{p}(\si)} \\
&\cdot \Big\| \Big(\sum_{u=1}^{U} \sum_{R \in \D} \big( \langle \de^{w}_{R}g_{u} \rangle^{w}_{R_{k}}1_{R_{k}}T^{w}_{u}1_{R_{k}}\big)^{2} \Big)^{\frac{1}{2}} \Big\|_{L^{p'}(\si)} \\
&\lesssim  \mathcal{T}^{w} \|(f_{u})_{u=1}^{U} \|_{L^{p}(\si;l^{2})} 
\Big\| \Big(\sum_{u=1}^{U} \sum_{R \in \D} | \langle \de^{w}_{R}g \rangle^{w}_{R_{k}}1_{R_{k}}|^{2} \Big)^{\frac{1}{2}} \Big\|_{L^{q'}(w)} \\
& \lesssim \mathcal{T}^{w} \|(f_{u})_{u=1}^{U} \|_{L^{p}(\si;l^{2})} \|(g_{u})_{u=1}^{U} \|_{L^{q'}(w;l^{2})}. 
\end{split}
\end{equation*}

Now turn to the other sum in (\ref{comparable splitting}) to be estimated. With the same notation as there, we have $ 1_{\complement R_{k}} A_{u,K}^{w}1_{R_{k}} \not=0$ only if $ K \supset R$. Hence, using (\ref{average domination}), we get
\begin{equation*}
\begin{split}
&\Big|\Big\langle 1_{\complement R_{k}} \de^{\si,i}_{R}f_{u} , \langle \de^{w}_{R}g_{u} \rangle^{w}_{R_{k}}T^{w}_{u}1_{R_{k}}\Big\rangle_{\si}\Big| 
\leq \sum_{\begin{substack}{K \in \D: \\ K \supset  R}\end{substack}} \frac{ \|1_{\complement R_{k}} \de^{\si,i}_{R}f_{u}  \|_{L^{1}(\si)} 
\| 1_{R_{k}} \de^{w}_{R}g_{u} \|_{L^{1}(w)}  }{|K|} \\
& \simeq \frac{ \|1_{\complement R_{k}} \de^{\si,i}_{R}f_{u}  \|_{L^{1}(\si)} 
\|1_{R_{k}} \de^{w}_{R}g_{u} \|_{L^{1}(w)}  }{|R|}.
\end{split}
\end{equation*}

Summing this over $k$, and then over $R \in \D$ and $u\in \{1, \dots,U\}$ leads, similarly as in equations (\ref{towards Ap}) and (\ref{application of Ap}), to
\begin{equation}\label{secon appl. of Ap}
\begin{split}
& \sum_{u=1}^{U} \sum_{R \in \D} \sum_{k=1}^{2^{N}} \frac{ \| 1_{\complement R_{k}}\de^{\si,i}_{R}f_{u}  \|_{L^{1}(\si)} 
\|1_{R_{k}} \de^{w}_{R}g_{u} \|_{L^{1}(w)}  }{|R|} \\
\leq  &\Big\|\Big(\sum_{u=1}^{U}\sum_{R \in \D} \Big(   \frac{\|\de^{\si,i}_{R} f_{u}\|_{L^{1}(\si)}}{|R|}\Big)	^{2}1_{R}\Big)^{\frac{1}{2}}\Big\|_{L^{q}(w)}\\
\cdot& \Big\|\Big(\sum_{u=1}^{U}\sum_{R \in \D} \big(\de^{w}_{R}g_{u} \big)^{2}\Big)^{\frac{1}{2}}\Big\|_{L^{q'}(w)} \\
 \lesssim &[\si,w]_{p,q} \|(f_{u})_{u=1}^{U} \|_{L^{p}(\si;l^{2})} \|(g_{u})_{u=1}^{U} \|_{L^{q'}(w;l^{2})}.
\end{split}
\end{equation}
Summing over $i\in \{0,\dots, \kappa\}$ produces the factor $1+\kappa$ in the final estimate. 

This finishes the proof of the case ``$ Q \subset R \subset Q^{(\kappa)}$'', and hence also of Theorem \ref{two weight shift}.

\end{proof}

\begin{lem}
Let $1<p,q<\infty$ and suppose $\mathscr{T}$ is a family of dyadic shifts containing all shifts with parameters $(m,n)$. If $\mathscr{T}$ is $\mathcal{R}$-bounded from $L^{p}(\si)$ into $L^{q}(w)$, then
\begin{equation*}
[\si,w]_{p,q} \leq 2^{N\min(m,n)} \mathcal{R}(\mathscr{T}).
\end{equation*} 
\end{lem}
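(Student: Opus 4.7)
The plan is to test the $\mathcal{R}$-boundedness of $\mathscr{T}$, in the equivalent form (\ref{R-boundedness in L^{p}}), against carefully chosen shifts and functions that reproduce the left hand side of the quadratic $\mathscr{A}_{p,q}$-inequality (\ref{square Ap}) up to a controlled loss of size $2^{N\min(m,n)}$. Write $r:=\min(m,n)$, and fix a finitely non-zero collection $\{a_Q\}_{Q\in\D}$ of real numbers. For every $Q\in\D$ I shall construct a shift $T_Q^\si \in \mathscr{T}$ of parameters $(m,n)$, consisting of a single non-trivial building block $A_{Q^{(r)}}^\si$, such that
\begin{equation*}
T_Q^\si 1_Q = \frac{\si(Q)}{|Q^{(r)}|}\,1_Q = 2^{-Nr}\,\frac{\si(Q)}{|Q|}\,1_Q.
\end{equation*}

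Suppose first that $n\leq m$, so $r=n$; the opposite case is handled symmetrically by interchanging the roles of $I$ and $J$ below. In the building block $A_K^\si$ with $K=Q^{(n)}$, take $J:=Q$ (so that $J^{(n)}=K$), let $I$ range over the $2^{N(m-n)}$ descendants of $Q$ at level $m-n$ below $Q$ (so that $I^{(m)}=K$), and choose the non-cancellative Haar functions $h_I^J:=|I|^{-1/2}1_I$ and $h_J^I:=|Q|^{-1/2}1_Q$ together with the maximal coefficients $a_{IJK}:=\sqrt{|I||J|}/|K|$. Since the $I$s are pairwise disjoint with union $Q$, a short computation using $\sum_I \si(I)=\si(Q)$ yields
\begin{equation*}
T_Q^\si 1_Q = A_K^\si 1_Q
= \sum_{I} \frac{\sqrt{|I||Q|}}{|Q^{(n)}|}\cdot\frac{\si(I)}{\sqrt{|I|}}\cdot\frac{1_Q}{\sqrt{|Q|}}
= \frac{\si(Q)}{|Q^{(n)}|}\,1_Q.
\end{equation*}
For $m\leq n$ the dual choice $I:=Q$, $J\in ch^{(n-m)}(Q)$ leads via $\sum_J 1_J=1_Q$ to $T_Q^\si 1_Q=(\si(Q)/|Q^{(m)}|)\,1_Q$, which again has the stated form.

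Applying the $\mathcal{R}$-boundedness hypothesis (\ref{R-boundedness in L^{p}}) to the finite sequence $(T_Q^\si, a_Q 1_Q)_{Q}$ and inserting the identity for $T_Q^\si 1_Q$ on the left side gives
\begin{equation*}
2^{-Nr}\Big\|\Big(\sum_Q \Big(a_Q \frac{\si(Q)}{|Q|}\Big)^{2}1_Q\Big)^{1/2}\Big\|_{L^q(w)}
\leq \mathcal{R}(\mathscr{T})\,\Big\|\Big(\sum_Q (a_Q 1_Q)^2\Big)^{1/2}\Big\|_{L^p(\si)},
\end{equation*}
and rearranging produces the claimed bound $[\si,w]_{p,q}\leq 2^{Nr}\mathcal{R}(\mathscr{T})$. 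The only real step is the construction of $T_Q^\si$: the case split is forced by the asymmetric structure of a shift of parameters $(m,n)$, which permits at best aligning the quantity $\si(Q)/|Q|$ with $\si(Q)/|Q^{(\min(m,n))}|$, and this accounts for precisely the factor $2^{N\min(m,n)}$ in the conclusion.
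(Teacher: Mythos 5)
Your proof is correct and follows essentially the same approach as the paper: test the $\mathcal{R}$-boundedness against single-building-block shifts of parameters $(m,n)$ whose action on a function with modulus $1_Q$ recovers $\si(Q)/|Q^{(\min(m,n))}|\cdot 1_Q$. The only cosmetic differences are that the paper tests against $f_I := h_I\sqrt{|I|}$ (so the Haar functions in the shift need not be specified) and explicitly writes out only the case $m\le n$, whereas you test against $1_Q$ while fixing the shift's Haar functions to be non-cancellative and treat both orderings of $m,n$; these lead to the identical estimate.
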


\begin{proof}
Suppose for example that $m\leq n$. The situation $m>n$ is similar. For every $I \in \D$ define the shift
\begin{equation*}
T^{\si}_{I}:= \sum_{\begin{substack}{J \in \D: \\ J^{(n-m)}=I}\end{substack}} \frac{\sqrt{|I||J|}}{|I^{(m)}|} \langle \cdot, h_{I}\rangle_{\si}h_{J},
\end{equation*}
where the functions $h_{I}$ and $h_{J}$ are some fixed Haar functions related to the cubes $I$ and $J$. Define also the function $f_{I}:= h_{I} \sqrt{|I|}$.

With these definitions we have $|T^{\si}_{I}f_{I}| = \frac{\si(I)}{2^{Nm}|I|}1_{I}$, and clearly $|f_{I}|=1_{I}$. Thus, if $\{a_{I}\}_{I \in \D}$ is any finitely non zero set of real numbers, then
\begin{equation*}
\begin{split}
&2^{-Nm} \Big\| \Big( \sum_{I \in \D} \Big(a_{I} \frac{\si(I)}{|I|}1_{I}\Big)^{2} \Big)^{\frac{1}{2}} \Big\|_{L^{q}(w)}
=\Big\| \Big( \sum_{I \in \D} \big(a_{I} T^{\si}_{I} f_{I}\big)^{2} \Big)^{\frac{1}{2}} \Big\|_{L^{q}(w)} \\
& \leq  \mathcal{R}(\mathscr{T}) \Big\| \Big( \sum_{I \in \D} \big(a_{I} f_{I} 1_{I}\big)^{2} \Big)^{\frac{1}{2}} \Big\|_{L^{p}(\si)}
=\mathcal{R}(\mathscr{T})\Big\| \Big( \sum_{I \in \D} a_{I}^{2}1_{I} \Big)^{\frac{1}{2}} \Big\|_{L^{p}(\si)},
\end{split}
\end{equation*}
which shows that 
$[\si,w]_{p,q} \leq 2^{Nm} \mathcal{R}(\mathscr{T})$.
\end{proof}

\begin{cor}
Suppose $1<p,q<\infty$. The family $\mathscr{T}$ of all shifts with parameters $(m,n)$ is $\mathcal{R}$-bounded from $L^{p}(\si)$ into $L^{q}(w)$ if and only if the family satisfies the quadratic testing conditions (\ref{testing direct}) and (\ref{testing dual}), and the quadratic $\mathscr{A}_{p,q}$-condition holds. Moreover we have the quantitative estimate
\begin{equation*}
2^{-N\min(m,n)}[\si,w]_{p,q} +\mathcal{T}^{\si} + \mathcal{T}^{w} \lesssim  \mathcal{R}(\mathscr{T})
\lesssim (1+\kappa)(\mathcal{T}^{\si} + \mathcal{T}^{w}) + (1+\kappa)^{2}[\si,w]_{p,q},
\end{equation*}
where $\mathcal{T}^{\si}$ and $\mathcal{T}^{w}$ are the testing constants and $\kappa=\max\{m,n\}$.
\end{cor}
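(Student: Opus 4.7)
The claim is essentially a repackaging of Theorem \ref{two weight shift} together with the preceding lemma, so the plan is to verify the two directions of the equivalence and track the constants.

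First, the upper bound $\mathcal{R}(\mathscr{T}) \lesssim (1+\kappa)(\mathcal{T}^{\si}+\mathcal{T}^{w}) + (1+\kappa)^{2}[\si,w]_{p,q}$ with $\kappa = \max\{m,n\}$ is Theorem \ref{two weight shift} applied verbatim to the family $\mathscr{T}$ of all shifts with parameters $(m,n)$; indeed each such shift has complexity at most $\kappa$, so the hypotheses of the theorem match once one assumes the two quadratic testing conditions together with $[\si,w]_{p,q}<\infty$. This also gives the ``if'' direction of the equivalence.

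For the converse direction and the lower bounds, assume $\mathscr{T}$ is $\mathcal{R}$-bounded. The bound $[\si,w]_{p,q}\leq 2^{N\min(m,n)}\mathcal{R}(\mathscr{T})$ is precisely the preceding lemma. Next, to obtain $\mathcal{T}^{\si}\lesssim \mathcal{R}(\mathscr{T})$, I would plug $f_{u}=a_{u}1_{Q_{u}}$ into the defining inequality \eqref{R-boundedness in L^{p}}: this yields
\begin{equation*}
\Big\| \Big( \sum_{u=1}^{U} \big(a_{u} T_{u}^{\si} 1_{Q_{u}}\big)^{2} \Big)^{\frac{1}{2}} \Big\|_{L^{q}(w)}
\lesssim \mathcal{R}(\mathscr{T}) \Big\| \Big( \sum_{u=1}^{U} a_{u}^{2}1_{Q_{u}} \Big)^{\frac{1}{2}} \Big\|_{L^{p}(\si)},
\end{equation*}
and the pointwise bound $|1_{Q_{u}}T_{u}^{\si}1_{Q_{u}}|\leq |T_{u}^{\si}1_{Q_{u}}|$ is enough to recover \eqref{testing direct}.

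The remaining estimate $\mathcal{T}^{w}\lesssim \mathcal{R}(\mathscr{T})$ is obtained by passing to adjoints. Since $\mathscr{T}$ contains every shift with parameters $(m,n)$, the family of formal adjoints $\mathscr{T}^{*}=\{T^{w}_{\alpha}\}$ consists of all shifts with parameters $(n,m)$. By the duality pairing $\langle T^{\si}_u f_u, g_u\rangle_w=\langle f_u, T^{w}_u g_u\rangle_\si$ and the testing-free formulation of $\mathcal{R}$-boundedness \eqref{R-boundedness in L^{p}}, the inequality
\begin{equation*}
\Big|\sum_{u}\langle T_{u}^{\si}f_{u}, g_{u}\rangle_{w}\Big|
\lesssim \mathcal{R}(\mathscr{T})\,\|(f_{u})\|_{L^{p}(\si;l^{2})}\|(g_{u})\|_{L^{q'}(w;l^{2})}
\end{equation*}
is equivalent (by taking the supremum over $\|(f_u)\|_{L^p(\si;l^2)}\leq 1$) to $\mathcal{R}$-boundedness of $\mathscr{T}^{*}$ from $L^{q'}(w)$ into $L^{p'}(\si)$ with the same constant. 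Applying the argument of the previous paragraph to $\mathscr{T}^{*}$ then yields $\mathcal{T}^{w}\lesssim \mathcal{R}(\mathscr{T})$.

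None of these steps is really a ``hard'' step; the whole proof is a bookkeeping exercise. The only mild subtlety is checking that the duality argument for the adjoint family stays within the class of shifts of parameters $(n,m)$ so that the testing condition \eqref{testing dual} and the lemma on $[\si,w]_{p,q}$ are applicable with the correct exponent pair. Collecting the three lower bounds gives
$2^{-N\min(m,n)}[\si,w]_{p,q}+\mathcal{T}^{\si}+\mathcal{T}^{w} \lesssim \mathcal{R}(\mathscr{T})$, and combined with Theorem \ref{two weight shift} for the upper bound, the quantitative equivalence follows.
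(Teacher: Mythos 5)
Your proof is correct and matches the paper's (implicit) argument: the corollary is indeed just a consolidation of Theorem \ref{two weight shift} (for the upper bound and for $\mathcal{T}^{\si}+\mathcal{T}^{w}\lesssim\mathcal{R}(\mathscr{T})$, which is the necessity direction established at the start of that theorem's proof via the specialization $f_u=a_u 1_{Q_u}$ and duality) and the preceding lemma (for $2^{-N\min(m,n)}[\si,w]_{p,q}\lesssim\mathcal{R}(\mathscr{T})$). The paper does not spell out a separate proof for the corollary, and your bookkeeping fills that in faithfully along the same lines.
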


\subsection*{Dyadic shifts of a specific form}
We look at the case when all the operators $A^{\si}_{K}$ in the definition of the dyadic shifts are of the form
\begin{equation}\label{special case}
A_{K}^{\si}f:= \sum_{\begin{substack}{I,J : I^{(m)}=J^{(n)}= K \\ I \vee J=K}\end{substack}}a_{IJK} \langle f , h_{I}^J\rangle_{\si} h_{J}^I,
\end{equation}
where $I\vee J$ denotes the smallest cube (if it exists) in $\D$ containing both $I$ and $J$. Thus $I\vee J=K$ is equivalent with saying that $I$ and $J$ are subcubes of different children of $K$. This kind of dyadic shifts arise naturally when representing general Calder\'on-Zygmund operators with dyadic shifts as in \cite{H}.  Note that in this case if $A^{\si}_{K}$ is to be non zero then $m,n\geq 1$.

In this situation a weaker form of the quadratic $\mathscr{A}_{p,q}$-condition is sufficient in Theorem \ref{two weight shift}. Namely, let again $Q_{k}, k\in \{1, \dots, 2^{N}\}$, denote the dyadic children of a cube $Q \in \D$. We do not have any special ordering in mind, and in fact the ordering need not be the same for different cubes. Thus, if $Q,Q' \in \D$ and $Q \not=Q'$,  then $Q_{k}$ and $Q'_{k}$ need not be in symmetrical places with respect to the parents $Q$ and $Q'$. We say that the measures $\si$ and $w$ satisfy the quadratic $\mathscr{A}_{p,q}^{*}$-condition if for any $k,l \in \{1,\dots,2^{N}\}, k\not=l$, and any collection $\{a_{Q}\}_{Q \in \D}$ of real numbers  the inequality
\begin{equation}\label{square Ap, disjoint}
\Big\|\Big(\sum_{Q \in \D} \Big(a_{Q}\frac{\si(Q_{k})}{|Q_{k}|}\Big)^{2}1_{Q_{l}}\Big)^{\frac{1}{2}}\Big\|_{L^{q}(w)}
\leq [\si,w]_{p,q}^{*} \Big\|\Big(\sum_{Q \in \D} a_{Q}^{2}1_{Q_{k}}\Big)^{\frac{1}{2}}\Big\|_{L^{p}(\si)}
\end{equation}
is satisfied, and here again  $[\si,w]_{p,q}^{*}$ denotes the best possible constant. Similarly as with the quadratic $\mathscr{A}_{p,q}$-condition above we have $[\si,w]_{p,q}^{*}\simeq [w,\si]_{q',p'}^{*}$. 

The two weight inequality for the Hilbert transform was characterized by M. Lacey, E. Sawyer, C.-Y. Shen and I. Uriarte-Tuero \cite{LSSU} and M. Lacey \cite{L} in the case when the measures $\si$ and $w$ do not have common point masses. This restriction on the measures was lifted by T. Hyt\"onen in \cite{H1}, and a key new component was a similar kind of weakening as we have here of the \emph{Poisson $A_{2}$ conditions}  used in \cite{LSSU} and \cite{L}. 

\begin{theorem}\label{two weight shift, specific form}
Let $1<p,q<\infty$ be two exponents and assume that the measures $\si$ and $w$ satisfy the quadratic $\mathscr{A}^{*}_{p,q}$-condition. Suppose $\mathscr{T}$ is a collection of dyadic shifts with complexities at most $\kappa$, and suppose every shift in $\mathscr{T}$ is of the specific form (\ref{special case}). Then the collection $\mathscr{T}$ is $\mathcal{R}$-bounded from $L^{p}(\si)$ into $L^{q}(w)$ if and only if it satisfies the quadratic testing conditions (\ref{testing direct}) and (\ref{testing dual}), and in this case 
\begin{equation}
\mathcal{R}(\mathscr{T}) \lesssim (1+\kappa)(\mathcal{T}^{\si} + \mathcal{T}^{w}) + (1+\kappa)^{2}[\si,w]^{*}_{p,q}.
\end{equation}

\end{theorem}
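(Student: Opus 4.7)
The plan is to rerun the proof of Theorem \ref{two weight shift} and show that, under the specific form (\ref{special case}), the two places where $[\si,w]_{p,q}$ was invoked can instead be closed using $[\si,w]^{*}_{p,q}$. The necessity of the quadratic testing conditions is a soft duality argument independent of any $\mathscr{A}$-condition, so it transfers verbatim. For sufficiency I would keep the same three-case decomposition of $\sum_{u} \langle T^{\si}_u f_u, g_u\rangle_w$ (disjoint cubes $Q \cap R = \emptyset$; deeply contained $Q^{(\kappa)} \subsetneq R$; comparable $Q \subset R \subset Q^{(\kappa)}$), and the deeply contained case carries over without change since it uses only $\mathcal{T}^{w}$. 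The engine driving the improvement in the other two cases is a single structural observation about (\ref{special case}): because $I \vee J = K$ and $I^{(m)}=J^{(n)}=K$ with $m,n \geq 1$ force $I$ and $J$ into distinct children of $K$, the operator $A^{\si}_K$ (and similarly $A^{w}_K$) sends any function supported on one child $K_k$ into a function supported on $K \setminus K_k$.

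In the disjoint case, this off-diagonal property forces $\langle A^{\si}_{u,K}\de^{\si}_Q f_u, \de^{w}_R g_u\rangle_w$ to vanish unless $Q \subset K_k$ and $R \subset K_l$ for distinct children $k \neq l$ of $K$. The sum (\ref{first appl. of Ap}) therefore refines to
\begin{equation*}
\sum_{u=1}^{U}\sum_{i,j=1}^{\kappa} \sum_{K \in \D}\sum_{k \neq l} \frac{\|1_{K_k}\de^{\si,i}_K f_u\|_{L^{1}(\si)}\|1_{K_l}\de^{w,j}_K g_u\|_{L^{1}(w)}}{|K|},
\end{equation*}
and I would then run the Cauchy--Schwarz-and-H\"older step of (\ref{towards Ap}) keeping $1_{K_l}$ inside the $L^{q}(w)$-factor and $\si(K_k)/|K_k|$ as the averaging weight. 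At this point the $\mathscr{A}^{*}_{p,q}$-condition (\ref{square Ap, disjoint}) applies in exactly the role formerly played by (\ref{square Ap}), and Stein's and Burkholder's inequalities conclude as after (\ref{application of Ap}). Summing over $i,j \in \{1,\dots,\kappa\}$ and over pairs $k \neq l$ in the $2^{N}$ children of $K$ produces a bound $\lesssim \kappa^{2}[\si,w]^{*}_{p,q}$.

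For the comparable case, the first sum in (\ref{comparable splitting}) is handled by testing as in the original proof. For the second sum I would first establish the sharper localization that in $T^{w}_u 1_{R_k} = \sum_K A^{w}_{u,K}1_{R_k}$ only the term $K = R$ can contribute to the pairing against $1_{\complement R_k}\de^{\si,i}_R f_u$: if $K \subset R_k$ the output sits inside $R_k$; if $K \subsetneq R$ but $K \not\subset R_k$ then $1_{R_k}$ vanishes on $K$; if $K \cap R = \emptyset$ the output is disjoint from $R$; and if $K \supsetneq R$ then requiring $J \cap R_k \neq \emptyset$ forces $J$ into the unique child $K_{k^{*}}$ of $K$ containing $R$, so by $I \vee J = K$ the cube $I$ (which carries the output) lies in a different child of $K$, giving $I \cap R = \emptyset$. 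The surviving piece $A^{w}_{u,R}1_{R_k}$ is again off-diagonal, supported on children $R_{k'}$ with $k' \neq k$, and using (\ref{average domination}) I arrive at
\begin{equation*}
\sum_{u=1}^{U}\sum_{i=0}^{\kappa}\sum_{R \in \D}\sum_{k \neq k'} \frac{\|1_{R_k}\de^{w}_R g_u\|_{L^{1}(w)}\|1_{R_{k'}}\de^{\si,i}_R f_u\|_{L^{1}(\si)}}{|R|},
\end{equation*}
which has exactly the shape of the disjoint-case sum above and is estimated by $\mathscr{A}^{*}_{p,q}$, Stein and Burkholder to give $\lesssim (1+\kappa)[\si,w]^{*}_{p,q}$.

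The step I expect to be the main obstacle is the localization to $K = R$ in the comparable case, which genuinely exploits both defining features of (\ref{special case})---the cancellation built into $I \vee J = K$ together with $I^{(m)}=J^{(n)}=K$---and requires a careful case-by-case comparison of $K$ with $R$ and $R_k$. Once this reduction is in place, the remainder is a mechanical rerun of the quantitative bookkeeping in the proof of Theorem \ref{two weight shift}, with $[\si,w]_{p,q}$ replaced throughout by $[\si,w]^{*}_{p,q}$.
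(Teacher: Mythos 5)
Your proposal is correct and follows the paper's overall plan, but your treatment of the comparable case $Q \subset R \subset Q^{(\kappa)}$ is more elaborate than necessary. The step you flag as the main obstacle---the localization to $K=R$ via the case analysis $K\subset R_k$, $K\subsetneq R$ with $K\not\subset R_k$, $K\cap R=\emptyset$, $K\supsetneq R$---is true for shifts of the form (\ref{special case}), but the paper does not use it and in fact does not use the special form at all in this case. The point is that the original proof of Theorem \ref{two weight shift} has already collapsed the geometric sum $\sum_{K\supset R}|K|^{-1}\simeq|R|^{-1}$ to land on the quantity
\begin{equation*}
\sum_{u=1}^{U}\sum_{R\in\D}\sum_{k=1}^{2^{N}}\frac{\|1_{\complement R_{k}}\de^{\si,i}_{R}f_{u}\|_{L^{1}(\si)}\,\|1_{R_{k}}\de^{w}_{R}g_{u}\|_{L^{1}(w)}}{|R|},
\end{equation*}
and since $\de^{\si,i}_{R}f_{u}$ is supported on $R$ and the children $R_{l}$ are disjoint, one simply has $\|1_{\complement R_{k}}\de^{\si,i}_{R}f_{u}\|_{L^{1}(\si)}=\sum_{l\neq k}\|1_{R_{l}}\de^{\si,i}_{R}f_{u}\|_{L^{1}(\si)}$. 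This puts the expression directly in the form where $\mathscr{A}^{*}_{p,q}$ applies, with no appeal to (\ref{special case}); the off-diagonal support property you derive (that $A^{w}_{u,R}1_{R_{k}}$ lives on $\cup_{k'\neq k}R_{k'}$) is a true consequence but buys nothing here. The genuine content of the theorem is thus concentrated in the disjoint case, where your use of the refined domination to produce the double sum over distinct children $k\neq l$ of $K$ agrees exactly with the paper's (\ref{average domination in special case}) and the subsequent application of (\ref{square Ap, disjoint}). Your deeply-contained case and the necessity direction are also handled correctly. In short: the proof is right, and the quantitative bookkeeping gives the stated constants, but you have proved more than needed in the comparable case.
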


We outline the proof Theorem \ref{two weight shift, specific form}. This is very probably known to specialists, but we record this fact here.

All we need to do is to look at the proof above and consider the places where the quadratic $\mathscr{A}_{p,q}$-condition was applied, and show that in this special case it is enough to assume the weaker condition. The quadratic $\mathscr{A}_{p,q}$-condition was applied in two places: first in the end of the subsection  dealing with the case $``Q \cap R = \emptyset$'', and then in the end of the case ``$ Q \subset R \subset Q^{(\kappa)}$''.

Assume that $K \in \D$ and we have an operator $A^{\si}_{K}$ of the form (\ref{special case}). Then for $f \in L^{1}_{loc}(\si)$ and $g \in L^{1}_{loc}(w)$ we have
\begin{equation}\label{average domination in special case}
\begin{split}
\Big|\big\langle A^{\si}_{K}f,g\big\rangle_{w} \Big|
&=\Big|\sum_{\begin{substack}{k,l \in \{1,\dots,2^{N}\} \\ k \not= l }\end{substack}}\sum_{\begin{substack}{I^{(m-1)}=K_{k} \\ J^{(n-1)}= K_{l}}\end{substack}}a_{IJK} \langle f , h_{I}^J\rangle_{\si} \langle g,h_{J}^I\rangle_{w}\Big| \\
& \leq \sum_{\begin{substack}{k,l \in \{1,\dots,2^{N}\} \\ k \not= l  }\end{substack}} \frac{\| 1_{K_{k}}f\|_{L^{1}(\si)}\|1_{K_{l}}g\|_{L^{1}(w)}}{|K|}.
\end{split}
\end{equation}

If we use (\ref{average domination in special case}) in (\ref{first appl. of Ap}) we end up with the term 
\begin{equation*}
\sum_{i,j=1}^{\kappa} \sum_{K \in \D}\sum_{k\not=l} \frac{\|1_{K_{k}}\de^{\si,i}_{K}f_{u}\|_{L^{1}(\si)}\|1_{K_{l}}\de^{w,j}_{K}g_{u}\|_{L^{1}(w)}}{|K|}.
\end{equation*}
If one continues as in (\ref{towards Ap}) with fixed $k\not=l$, the result is  
\begin{equation*}
\begin{split}
&\Big\|\Big(\sum_{u=1}^{U}\sum_{K \in \D} \big(\frac{\|1_{K_{k}}\de^{\si,i}_{K}f_{u}\|_{L^{1}(\si)}}{|K|}\big)^{2}1_{K_{l}}\Big)^{\frac{1}{2}}\Big\|_{L^{q}(w)} \\
&\cdot \Big\|\Big(\sum_{u=1}^{U}\sum_{K \in \D} \big(1_{K_{l}}\de^{w,j}_{K}g_{u}\big)^{2}\Big)^{\frac{1}{2}}\Big\|_{L^{q'}(w)}.
\end{split}
\end{equation*}
The factor related to $g$ is directly handled with Burkholder's inequality, and the other related to $f$ is estimated with the $\mathscr{A}_{p,q}^{*}$-condition similarly as in (\ref{application of Ap}). In the end one can sum over the finite ranges of $k$ and $l$. This takes care of the first application of the $\mathscr{A}_{p,q}^{*}$-condition.

The other application is even easier, since there the functions are already in the right form. If we look at the first term in (\ref{secon appl. of Ap}), we see that it can be written as
\begin{equation*}
\begin{split}
& \sum_{u=1}^{U} \sum_{R \in \D} \sum_{k=1}^{2^{N}} \frac{ \| 1_{\complement R_{k}}\de^{\si,i}_{R}f_{u}  \|_{L^{1}(\si)} 
\|1_{R_{k}} \de^{w}_{R}g_{u} \|_{L^{1}(w)}  }{|R|} \\
&=\sum_{\begin{substack}{k,l: \\ k\not=l}\end{substack}}\sum_{u=1}^{U}\sum_{R \in \D}\frac{ \|1_{R_{l}}\de^{\si,i}_{R}f_{u}  \|_{L^{1}(\si)}  \| 1_{R_{k}}\de^{w}_{R}g_{u} \|_{L^{1}(w)}  }{|R|},
\end{split}
\end{equation*}
and for  a fixed pair $k \not=l$ this can again be estimated with the $\mathscr{A}^{*}_{p,q}$-condition.

\section{Examples related to the quadratic $\mathscr{A}_{p,q}$-condition}

Consider the one weight case with $p=q\in (1,\infty)$, where we have an almost everywhere (in the Lebesgue sense) positive Borel measurable function $w: \R^{N} \to \R$. With the same symbol we also denote the Borel measure 
\begin{equation*}
w(E):= \int_{E} w \ud x,
\end{equation*}
where $E \subset \R^{N}$ is any Borel set. The dual weight to $w$ is $\si := w^{\frac{-1}{p-1}}$, and we again use $\si$ for the corresponding measure. The \emph{Muckenhoupt $A_{p}$ characteristic} is defined as 
\begin{equation*}
[w]_{p}:=\sup_{Q \in \D}\frac{\si(Q)^{p-1}w(Q)}{|Q|^{p}},
\end{equation*}
and the \emph{Muckenhoupt $A_{p}$ class} consists of those weights that have $[w]_{p} < \infty$. 

In this one weight case the weighted Stein's inequality (\ref{two weight Stein}) can be equivalently written as 
\begin{equation}\label{one weight Stein}
\Big\|\Big(\sum_{Q \in \D} \Big(  \frac{\int_{Q}f_{Q} \ud x}{|Q|}\Big)^{2}1_{Q}\Big)^{\frac{1}{2}}\Big\|_{L^{p}(w)}
\leq \mathscr{S} \Big\|\Big(\sum_{Q \in \D} f^{2}_{Q}1_{Q}\Big)^{\frac{1}{2}}\Big\|_{L^{p}(w)}.
\end{equation}

It can quite easily be seen that if $p=2$ the constant $\mathscr{S}$ in the weighted Stein's inequality is $[w]_{2}^{\frac{1}{2}}$, that is the inequality (\ref{one weight Stein}) holds with a finite constant if and only if the weight is in the Muckenhoupt  $A_{2}$ class. A quantitative form of the extrapolation theorem of Rubio de Francia \cite{F} by O. Dragi\v cevi\'c, L. Grafakos, M. Pereyra and S. Petermichl \cite{DGPP} gives then that the best constant $\mathscr{S}(w,p)$ in (\ref{one weight Stein}) satisfies 
\begin{equation*}
\mathscr{S}(w,p) \lesssim \begin{cases} [w]_{p}^{\frac{1}{2(p-1)}}, \ \ 1<p\leq 2, \\ [w]_{p}^{\frac{1}{2}}, \ \ 2\leq p < \infty. \end{cases}
\end{equation*}
Since Lemma \ref{square&Stein} shows that the quadratic $\mathscr{A}_{p,q}$-constant is equivalent to the best constant in the two weight Stein's inequality, we get the quantitative estimates
\begin{equation*}
\begin{cases} [w]_{p}^{\frac{1}{p}} \leq [\si,w]_{p,p} \lesssim  [w]_{p}^{\frac{1}{2(p-1)}}, \ \ &1<p \leq 2, \\ [w]_{p}^{\frac{1}{p}}\leq [\si,w]_{p,p} \lesssim  [w]_{p}^{\frac{1}{2}}, \ \ &2 \leq p < \infty. \end{cases}
\end{equation*}

On the other hand in the general two weight setting the quadratic $\mathscr{A}_{p,q}$-condition is strictly stronger than the simple $A_{p,q}$-condition if $p>2$ or $q<2$: 

\begin{lem}\label{counter ex.}
Let $p,q \in (1,\infty)$ be two exponents.
\begin{itemize}
\item[a)] If $1<p\leq 2\leq q< \infty$, then  $(\si,w)_{p,q}=[\si,w]_{p,q}$ for all Radon measures $\si$ and $w$.
\item[b)] If $2<p<\infty$ or $1<q<2$, then there exist Radon measures $\si$ and $w$ such that $(\si,w)_{p,q}<\infty$ but $[\si,w]_{p,q}=\infty$.
\end{itemize}
\end{lem}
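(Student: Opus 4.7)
Part (a) is immediate: Lemma \ref{simple implies square} already gives $[\si,w]_{p,q} \leq (\si,w)_{p,q}$ in the range $1<p\leq 2\leq q<\infty$, and the reverse inequality was observed right after \eqref{square Ap} by taking a single nonzero coefficient. So (a) requires no new work.

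For part (b) the first step is a reduction by duality. Both the simple and the quadratic $\mathscr{A}_{p,q}$-conditions are symmetric under $(\si,w,p,q)\mapsto(w,\si,q',p')$---for the quadratic one this is the symmetry lemma proved earlier in Section \ref{square Ap condition}, while for the simple one the identity $(w,\si)_{q',p'}=(\si,w)_{p,q}$ is purely algebraic. Hence a counterexample for $(p,q)$ with $p>2$ will follow from one for $(q',p')$ with $q'<2$, so it is enough to construct an example in the range $1<q<2$ with $p\in(1,\infty)$ arbitrary.

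The plan is to use a single chain of nested dyadic cubes in $\R$. I will pick $x_{0}\in\R$ avoiding the dyadic grid (say $x_{0}=1/3$), so that there is a unique nested family $Q_{1}\supsetneq Q_{2}\supsetneq\cdots$ with $x_{0}\in\bigcap_{k}Q_{k}$ and $|Q_{k}|=2^{-k}$, set $\si:=\delta_{x_{0}}$, and take $w$ to be any Radon measure with $w(Q_{k}\setminus Q_{k+1})=2^{-kq}$ for every $k\geq 1$. The simple $A_{p,q}$-constant will be finite since $\si(Q)=0$ unless $Q=Q_{k}$ for some $k$, in which case the geometric series $w(Q_{k})=\sum_{j\geq k}2^{-jq}\simeq 2^{-kq}$ gives $\frac{\si(Q_{k})^{1/p'}w(Q_{k})^{1/q}}{|Q_{k}|}\simeq 1$. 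To see the quadratic condition fails, I will test \eqref{square Ap} with $a_{Q_{k}}=1$ for $k=1,\ldots,N$ and zero otherwise: the right-hand side equals $[\si,w]_{p,q}\cdot N^{1/2}$ because the inner sum at $x_{0}$ is exactly $N$, while on each annulus $A_{j}:=Q_{j}\setminus Q_{j+1}$ the $L^{q}$-integrand $\sum_{k\leq j\wedge N}2^{2k}$ is of order $2^{2(j\wedge N)}$, so multiplying by $w(A_{j})=2^{-jq}$ yields an $O(1)$ contribution per annulus and a total of order $N^{1/q}$. Comparing the two sides will force $[\si,w]_{p,q}\gtrsim N^{1/q-1/2}$, which blows up as $N\to\infty$ precisely because $q<2$.

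The main obstacle is really just calibration: $w$ has to be chosen so that the geometric sums $w(Q_{k})$ stay proportional to $|Q_{k}|^{q}$ (so the simple constant is bounded), while still leaving a slack of order $N^{1/q-1/2}$ between the $\ell^{2}$-structure on the left of \eqref{square Ap} and the $\ell^{p}$-structure on the right. This gap is exactly what distinguishes the quadratic condition from the simple one outside the range of (a), and concentrating $\si$ on a single point is what automatically makes the simple condition hold on every dyadic cube rather than only those in the chain.
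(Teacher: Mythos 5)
Your proposal is correct and follows essentially the same strategy as the paper: part (a) is Lemma \ref{simple implies square}, part (b) reduces by the symmetry $[\si,w]_{p,q}\simeq[w,\si]_{q',p'}$ to the range $q<2$, and the counterexample is built from a geometric chain of nested dyadic cubes with $\si$ concentrated at one end, $w$ spread along the annuli of the chain calibrated so that $w(Q_k)\simeq |Q_k|^q/\si(Q_k)^{q/p'}$, and the quadratic condition is then tested with $a_{Q_k}=1$ for $k=1,\dots,N$, producing $N^{1/q}\lesssim [\si,w]_{p,q}N^{1/2}$. The only difference is the concrete choice of measures: the paper takes $\si$ to be Lebesgue measure on a cube $Q_0$ and puts $w$ on the annuli $Q_0^{(k)}\setminus Q_0^{(k-1)}$ going outward, while you take $\si=\delta_{x_0}$ and put $w$ on the shrinking annuli $Q_k\setminus Q_{k+1}$ going inward; the computations are the same in both cases. (One small slip: in your duality reduction the condition should read $p'<2$ rather than $q'<2$, since $p>2$ forces $p'<2$ and $p'$ is the second exponent of the dual pair $(q',p')$; the conclusion you draw is nevertheless correct.)
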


\begin{proof}
The case a) is just Lemma \ref{simple implies square}, so we need to prove only the other assertion. Consider now some exponents $1<p,q<\infty$ and choose a cube $Q_{0} \in \D$ with $|Q_{0}|=1$. Then we simply set the measure $\si$ to be $1_{Q_{0}}\ud x $, that is, the Lebesgue measure restricted to $Q_{0}$.

The measure $w$ that we next construct must satisfy
\begin{equation*}
w(Q) \leq C \frac{|Q|^{q}}{\si(Q)^{\frac{q}{p'}}}, \ \ Q \in \D,
\end{equation*}
for some constant $C$. Keeping this in mind we set $w$ to be
\begin{equation*}
w:= \sum_{k=1}^{\infty}|Q_{0}^{(k)}|^{q-1}1_{Q_{0}^{(k)}\setminus Q_{0}^{(k-1)}}\ud x.
%=\sum_{k=1}^{\infty}2^{Nk(q-1)}1_{Q_{0}^{(k)}\setminus Q_{0}^{(k-1)}}\ud x
\end{equation*}  

To see that the pair $(\si,w)$ satisfies the simple $A_{p,q}$ condition, first note that since the measures are supported on $Q_{0}$ and $\complement Q_{0}$, respectively,  then $\si(Q)w(Q)=0$ for all cubes $Q \in \D$ with $l(Q) \leq 1$. Also if $Q\in \D$ is such that $l(Q)>1$ and $\si(Q)\not=0$, there exists an $l \in \{1,2,\dots\}$ such that $Q=Q^{(l)}_{0}$. But then
\begin{equation*}
w(Q^{(l)}_{0})=\sum_{k=1}^{l}|Q_{0}^{(k)}|^{q-1}|Q_{0}^{(k)}\setminus Q_{0}^{(k-1)}|
\simeq \sum_{k=1}^{l}|Q_{0}^{(k)}|^{q}
\simeq |Q_{0}^{(l)}|^{q},
\end{equation*}
and this shows that 
\begin{equation*}
\frac{\si(Q^{(l)}_{0})^{\frac{1}{p'}}w(Q^{(l)}_{0})^{\frac{1}{q}}}{|Q^{(l)}_{0}|} \lesssim 1.
\end{equation*}
Thus $(\si,w)_{p,q}\lesssim 1$.

On the other hand consider the quadratic $\mathscr{A}_{p,q}$-condition, and choose some $K \in \{1,2,\dots\}$. We set $a_{k}=1$ for $k \in \{1,\dots, K\}$ and $a_{k}=0$ for $k>K$. Then the construction of the measures shows that
\begin{equation}\label{counter example, left}
\begin{split}
&\Big\|\Big(\sum_{k = 1}^{K} \Big(a_{k}\frac{\si(Q_{0}^{(k)})}{|Q_{0}^{(k)}|}\Big)^{2}1_{Q_{0}^{(k)}}\Big)^{\frac{1}{2}}\Big\|_{L^{q}(w)}^{q} \\
&=\sum_{k=1}^{K} \Big(\sum_{m=k}^{K} |Q_{0}^{(m)}|^{-2}\Big)^{\frac{q}{2}}|Q_{0}^{(k)}|^{q-1} |Q_{0}^{(k)}\setminus Q_{0}^{(k-1)}| \\
&\simeq \sum_{k=1}^{K} |Q_{0}^{(k)}|^{-q+q}=K,
\end{split}
\end{equation}
where in the second to last step we used the fact that a geometric sum is about as big as its biggest term. 

For the quadratic $\mathscr{A}_{p,q}$-condition to hold, this should be dominated by
\begin{equation}\label{counter right computed}
[\si,w]_{p,q}^{q} \Big\|\Big(\sum_{k = 1}^{K}  1_{Q_{0}^{(k)}}\Big)^{\frac{1}{2}}\Big\|_{L^{p}(\si)}^{q}
=[\si,w]_{p,q}^{q} K^{\frac{q}{2}}.
\end{equation}
Comparing (\ref{counter example, left}) and (\ref{counter right computed}), we see that since $K$ was arbitrary, (\ref{counter right computed}) can dominate (\ref{counter example, left}) only if $q \geq 2$. 

So if $q<2$, we can construct a pair $(\si,w)$ of weights such that $(\si,w)_{p,q}<\infty$ but $[\si,w]_{p,q}=\infty$. On the other hand if $p>2$, then $p'<2$, and we can construct measures so that $(\si,w)_{q',p'}= (w,\si)_{p,q}<\infty$ and $[\si,w]_{q',p'}\simeq [w,\si]_{p,q}=\infty$. 

\end{proof}
Combining Lemmas \ref{square&Stein} and \ref{counter ex.} we get the following corollary:
\begin{cor}
If $p,q \in (1,\infty)$ are two exponents, then the simple $A_{p,q}$-condition is sufficient for the two weight Stein's inequality (\ref{two weight Stein}) if and only if $1<p\leq 2\leq q <\infty$.
\end{cor}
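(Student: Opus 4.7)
\smallskip

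The plan is to chain together the two previously established results: Lemma \ref{square&Stein} identifies the best constant $\mathscr{S}(\si,w,p,q)$ in the two weight Stein inequality (\ref{two weight Stein}) with the quadratic $\mathscr{A}_{p,q}$-constant $[\si,w]_{p,q}$ (up to implied constants depending only on $p,q$), and Lemma \ref{counter ex.} describes exactly when the quadratic $\mathscr{A}_{p,q}$-condition is comparable to the simple $A_{p,q}$-condition. The corollary is a direct translation of these statements into the language of the two weight Stein inequality, so no genuinely new argument is needed.

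First I would handle the sufficiency direction. Assume $1<p\leq 2\leq q<\infty$ and fix a pair $(\si,w)$ with $(\si,w)_{p,q}<\infty$. By Lemma \ref{counter ex.}(a) (equivalently Lemma \ref{simple implies square}) we have $[\si,w]_{p,q}=(\si,w)_{p,q}<\infty$, and Lemma \ref{square&Stein} then yields $\mathscr{S}(\si,w,p,q)\simeq [\si,w]_{p,q}<\infty$, which is precisely the two weight Stein inequality (\ref{two weight Stein}) with a finite constant.

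For the necessity direction I would argue by contraposition. If $p,q\in(1,\infty)$ do \emph{not} satisfy $1<p\leq 2\leq q<\infty$, then either $p>2$ or $q<2$, and Lemma \ref{counter ex.}(b) produces a pair of Radon measures $(\si,w)$ with $(\si,w)_{p,q}<\infty$ while $[\si,w]_{p,q}=\infty$. By the equivalence in Lemma \ref{square&Stein}, this forces $\mathscr{S}(\si,w,p,q)=\infty$, so the simple $A_{p,q}$-condition alone does not imply the two weight Stein inequality in these ranges of exponents. Since the two directions together cover all cases, there is no real obstacle; the entire content of the corollary has already been carried out in the two cited lemmas, and writing it up is essentially just assembling the two equivalences $\mathscr{S}\simeq [\si,w]_{p,q}$ and ``$[\si,w]_{p,q}\simeq (\si,w)_{p,q}$ iff $1<p\leq 2\leq q<\infty$'' into one sentence.
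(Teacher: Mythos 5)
Your argument is correct and coincides with the paper's own, which simply says ``Combining Lemmas \ref{square&Stein} and \ref{counter ex.} we get the following corollary.'' You have spelled out exactly that combination, using Lemma \ref{square&Stein} to translate finiteness of $\mathscr{S}$ into finiteness of $[\si,w]_{p,q}$ and Lemma \ref{counter ex.} to compare $[\si,w]_{p,q}$ with $(\si,w)_{p,q}$ in the two exponent ranges.
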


\end{document}